\newtheorem{prop}{Proposition}
\newtheorem{lem}{Lemma}
\newtheorem{theor}{Theorem}
\theoremstyle{definition}
\newtheorem{de}{Definition}
\theoremstyle{remark}
\newtheorem {re}{Remark}
\DeclareMathOperator{\cone}{cone}
\def\BG{{\mathbb G}}
\def\BG{{\mathbb G}}
\def\BC{{\mathbb C}}
\def\BK{{\mathbb K}}
\def\BZ{{\mathbb Z}}
\def\BN{{\mathbb N}}
\def\BQ{{\mathbb Q}}
\def\BP{{\mathbb P}}
\begin{document}
\date{}
\title[Additive actions on toric projective hypersurfaces]{Additive actions on toric projective hypersurfaces}
\author{Anton Shafarevich}
\address{Lomonosov Moscow State University, Faculty of Mechanics and Mathematics, Department of Higher Algebra, Leninskie Gory 1, Moscow, 119991 Russia; \linebreak and \linebreak
National Research University Higher School of Economics, Faculty of Computer Science, Pokrovsky Boulevard 11, Moscow, 109028 Russia}
\email{shafarevich.a@gmail.ru}

\thanks{The author was supported by the grant RSF-19-11-00172.}
\subjclass[2010]{Primary 14M25, 14L30; Secondary 14R20, 13N15, 14M17}
\keywords{Complete variety, toric variety, additive action, projective hypersurface}

\maketitle

\begin{abstract}
Let $\BK$ be an algebraically closed field of characteristic zero and $\BG_a$ be the additive group of $\BK$. We say that an irreducible algebraic variety $X$ of dimension $n$ over the field $\BK$ admits an  additive action if there is a regular action of the group $\BG_a^n = \BG_a \times \ldots \times \BG_a$ ($n$ times) on  $X$ with an open orbit. In this paper we find all projective toric hypersurfaces admitting additive action.
\end{abstract}

\section{Introduction}
Let $\BK$ be an algebraically closed field of characteristic zero and $\BG_a$ be the additive group of $\BK$. Consider an irreducible algebraic variety $X$ of dimension $n$ over $\BK$. Denote by $\BG_a^n$ the group $\BG_a \times \ldots \times \BG_a$ ($n$ times). \emph{An additive action} on $X$ is a regular action $\BG_a^n \times X \to X$ with an open orbit.

In \cite{HT}  Hassett and Tschinkel established a correspondence between additive actions on the projective space $\BP^n$ and local $(n+1)$-dimensional commutative associative algebras with unit. This correspondence allows to classify additive actions on $\BP^n$ when $n\leq 5$ and it shows that there are infinitely many additive actions on $\BP^n$ when $n>5$. This remarkable result motivated the study of additive actions on other algebraic varieties.

There are several results on additive actions on projective hypersurfaces \cite{AP, AS, SH}, flag varieties \cite{AF, DEV, FEI, FU}, singular del
Pezzo surfaces \cite{DER}, weighted projective planes \cite{ABZ} and general toric varieties \cite{[ARRO], DZ, DZ2}.

Denote by $\BK^{\times}$ the multiplicative group of the field $\BK$. Let $T$ be the torus $(\BK^{\times})^n$. Recall that a normal algebraic variety $X$ is called a \emph{toric} variety if $X$ contains $T$ as a Zariski open subset and the action of $T$ on itself extends to an action of $T$ on $X$. In \cite{[ARRO]} one can find a description of complete toric variety admitting an additive action. Also there was proven that complete toric variety $X$ admits an additive action if and only if $X$ admits an additive action which is normalized by the action of torus $T$ on $X$.

Our goal is to find all projective toric hypersurfaces admitting an additive action. Each projective toric variety can be given by a lattice polytope in the lattice of characters of a torus. An $n$-dimensional projective toric variety can be represented as a hypersurface in the projective space if and only if it can be given by a lattice polytope in the character lattice of the acting torus such that the number of lattice points inside this polytope is $n+2$. On the other hand, it is proven in \cite{[ARRO]} that a projective toric variety given by a lattice polytope admits an additive action if and only if this polytope is inscribed in a rectangle; see Definition \ref{ir} below. So in Proposition \ref{Polytopes} we find all lattice polytopes which meet these two conditions. It turns out that appart from the projective space there are two projective toric hypersurfaces in every dimension greater than 1 admitting an additive action (Theorem \ref{Mth}). They are quadrics of rank 3 and 4.    

In \cite{COX2} one can find a description of the automorphism group of a complete toric variety. In Section \ref{Aut} we compute the automorphism groups of quadrics of rank 3 and 4 using this reslut. 

There is a one-to-one correspondence between additive actions on projective hypersurfaces and pairs $(R, W)$ where $R$ is a local algebra and $W$ is a hypersurface in the maximal ideal of $R$ which generates $R$; see \cite{AP, AS}. In the last section we use this correspondence to find the number of additive actions on quadrics of rank 3 and 4 when dimension is equal to two, three or four. 

The author is grateful to Ivan Arzhantsev for the statement of the problem.

\section{Preliminaries}\label{prem}

Here we recall some basic facts on toric varieties. One can find all necessary information on toric varieties in \cite{COX, FUL}.
Also we give a discription of complete toric varieties admitting an additive action which was obtained in \cite{[ARRO]}.

\begin{de}
A \emph{toric variety} is a normal irreducible algebraic variety $X$ containing a torus $T$ as a Zariski open subset such that the action of $T$ on itself extends to a regular action of $T$ on $X$.
\end{de}

Let $M$ be the character lattice of $T$ and $N$ be the dual lattice of one-parameter subgroups of $T$. Denote by $M_{\BQ}$ the vector space $M\otimes_{\BZ}\BQ$ and by $N_{\BQ}$ the vector space $N\otimes_{\BZ}\BQ$. Denote by $\left<\cdot, \cdot\right>:N\times M \to \mathbb{Z}$ the natural pairing between lattices $N$ and $M$. This pairing can be extended to a map $N_{\BQ}\times M_{\BQ} \to \BQ$. 

\begin{de} A \emph{lattice polytope} $P\subseteq M_{\BQ}$ is the convex hull of a finite subset in $M$.
\end{de}

\begin{de}
Let $S\subseteq M$ be a semigroup. Then $S$ is \emph{saturated} if for all $k \in \BN\setminus \{0\}$ and for all $a\in S$ the condition $ka\in S$ implies $a\in S$.
\end{de}

Let $m$ be a vertex of a lattice polytope $P$. Denote by $S_{P, m}$ the semigroup in $M$ generated by the set $\left(P\cap M\right) - m$.

\begin{de}
A lattice polytope $P$ is \emph{very ample} if for every vertex $m \in P$, the semigroup $S_{P, m}$ is saturated.
\end{de}

Let $P$ be a full dimensional very ample lattice polytope and $P\cap M = \{m_1, \ldots, m_{s+1}\}$. Then one can consider the map

$$T \longrightarrow \BP^{s},\ \ \ t \longmapsto [\chi^{m_1(t)}: \ldots : \chi^{m_{s+1}(t)}],$$
where $\chi^{m_i}$ is the character of $T$ corresponding to $m_i$. Denote by $X_P$ the closure of the image of this map. Then $X_P$ is a projective toric variety.

Conversely, let $Y \subseteq \BP^{s}$ be a projective toric variety with acting torus $T$. Suppose that $Y$ is not contained in any hyperplane in $\BP^{s}$. Then $Y$ coincides with $X_P$ for some very ample polytope $P$ in $M$ up to automorphism of 
$\BP^{s}$.

\label{add}

Now let $X$ be an irreducible algebraic variety of dimension $n$. Denote by $\BG_a^n$ the group  $\BG_a \times \ldots \times \BG_a$ ($n$ times).

\begin{de}
An \emph{additive action} on a variety $X$ is a regular action $\BG_a^n \times X \to X$ with an open orbit.
\end{de}

Let $X$ be a complete toric variety with an acting torus $T$ and $M$ be the character lattice of $T$. Let $P$ be a lattice polytope in $M_{\BQ}$. Then each facet of $P$ is the intersection of $P$ with the hypersurface given by an equation of the from $\left<p, x\right> = a,$ where $p \in N, a \in \mathbb{Z}$ and $\left<p, x\right>\leq a$ for all $x$ in $P$. 

\begin{de}\label{ir}
Polytope $P$ is \emph{inscribed in a rectangle} if there is a vertex $v_0 \in P$ such that
\begin{itemize}
\item[(1)] the primitive vectors on the edges of $P$ containing $v_0$ form a basis $e_1, \ldots, e_n$ of the lattice $M$;

\item[(2)] for every inequality $\left<p, x\right>\leq a$ on $P$ that corresponds to a facet of $P$ not passing through $v_0$ we have $\left<p, e_i\right>\geq 0$ for all $i = 1, \ldots, n$.
\end{itemize}

The following theorem describes complete toric varieties admitting an additive action. 

\begin{theor} \cite[Theorem 4]{[ARRO]}
Let $P$ be a very ample polytope and $X_P$ be the corresponding projective toric variety. Then $X_P$ admits an additive action if and only if the polytope $P$ is inscribed in a rectangle.
\end{theor}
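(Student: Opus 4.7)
My approach combines the reduction, recalled in the introduction, that $X_P$ admits an additive action if and only if it admits one normalized by $T$, with the standard description of $T$-normalized $\BG_a$-subgroups of $\Aut(X_P)$ via \emph{Demazure roots}. Recall that an element $m \in M$ is a Demazure root if there exists a ray $\rho_m$ of the normal fan of $P$ with $\langle \rho_m, m \rangle = -1$ and $\langle \rho, m \rangle \geq 0$ for every other ray $\rho$; each such $m$ determines a root subgroup $U_m \subset \Aut(X_P)$ isomorphic to $\BG_a$ and normalized by $T$, and conversely every torus-normalized $\BG_a$-subgroup arises this way.

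For the ``if'' direction, suppose $P$ is inscribed in a rectangle with vertex $v_0$ and primitive edge vectors $e_1, \ldots, e_n$. Condition (1) says that the maximal cone $\sigma_0$ of the normal fan at $v_0$ is smooth, with ray generators $\rho_1, \ldots, \rho_n$ forming the basis of $N$ dual to $e_1, \ldots, e_n$. I claim $m_i := -e_i$ is a Demazure root attached to $\rho_i$: indeed $\langle \rho_j, -e_i \rangle = -\delta_{ij}$, and for any ray $\rho$ corresponding to a facet $\{\langle p, \cdot \rangle = a\}$ not through $v_0$ (with outward primitive normal $p$), condition (2) yields $\langle -p, -e_i \rangle = \langle p, e_i \rangle \geq 0$. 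The subgroups $U_{-e_1}, \ldots, U_{-e_n}$ commute, since no sum $-e_i - e_j$ can itself be a Demazure root (the ray $\rho_j$ would violate the nonnegativity condition at such a putative root), and on the smooth affine chart $U_{\sigma_0} \cong \BA^n$ around the fixed point $x_{v_0}$ the combined action is by translation in the coordinates dual to $e_1, \ldots, e_n$, producing an open orbit.

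For the ``only if'' direction, the reduction lets us assume the $\BG_a^n$-action is $T$-normalized, so it is generated by commuting root subgroups $U_{m_1}, \ldots, U_{m_n}$ supported on rays $\rho_1, \ldots, \rho_n$. Openness of the orbit forces $m_1, \ldots, m_n$ to be linearly independent in $M_{\BQ}$; localizing at a $T$-fixed point in the orbit closure then shows that $\rho_1, \ldots, \rho_n$ must generate a single smooth maximal cone $\sigma_0$ of the fan and that, after re-indexing, $m_i = -e_i$ where $e_1, \ldots, e_n$ is the lattice basis of $M$ dual to $\rho_1, \ldots, \rho_n$. The vertex $v_0$ corresponding to $\sigma_0$ then satisfies condition (1), and the Demazure inequalities $\langle \rho, m_i \rangle \geq 0$ for the remaining rays become condition (2) via the dictionary between rays of the fan and outward facet normals of $P$.

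The main technical obstacle in both directions is the local analysis at the fixed point $x_{v_0}$: verifying openness of the orbit in the ``if'' direction, and in the ``only if'' direction ruling out the possibility that the supporting rays $\rho_{m_i}$ come from several distinct maximal cones rather than forming a single smooth one. The combinatorics of sums of Demazure roots and the resulting commutation relations among their root subgroups are the main lever in both steps; once this local picture is pinned down, the equivalence with the combinatorial condition of being inscribed in a rectangle reduces to unwinding definitions.
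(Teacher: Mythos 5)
This statement is imported: the paper quotes it from \cite{[ARRO]} (Theorem 4 there) and gives no proof of its own, so there is no in-paper argument to compare yours against. Your plan does reconstruct the strategy of the cited source: reduce to a $T$-normalized action, identify $T$-normalized $\BG_a$-subgroups with Demazure root subgroups, and translate the existence of $n$ commuting root subgroups acting with an open orbit into the combinatorial condition on $P$. The ``if'' direction as you sketch it is essentially complete: with $v_0$ at the origin, the cone of the normal fan at $v_0$ is generated by the dual basis $e^1,\ldots,e^n$, each $-e_i$ is a Demazure root by condition (2), the brackets vanish because $-e_i-e_j$ pairs negatively with the two distinct rays $\rho_i,\rho_j$ and hence is not a root (and the corresponding graded piece of $\lie(\Aut(X_P))$ is zero by Demazure's structure theorem), and on the invariant chart $U_{\sigma_0}\cong\BA^n$ the group acts by translations, giving the open orbit.

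The ``only if'' direction is where the substance lies, and there you have named the gap rather than closed it. After decomposing the Lie algebra of the $T$-normalized $\BG_a^n$ into root vectors $\partial_{m_1},\ldots,\partial_{m_n}$, one still has to prove that the distinguished rays $\rho_{m_1},\ldots,\rho_{m_n}$ are pairwise distinct, that commutativity together with the open-orbit condition forces $\langle\rho_{m_i},m_j\rangle=-\delta_{ij}$ (a ``complete collection'' of roots in the terminology of \cite{[ARRO]}), and that these rays span a smooth maximal cone of the fan so that the corresponding vertex of $P$ satisfies conditions (1) and (2). ``Localizing at a $T$-fixed point in the orbit closure'' is the right idea but is exactly the step that needs the argument; a priori the open $\BG_a^n$-orbit is the dense torus orbit and one must show its complement contains a fixed point whose cone is generated by the $\rho_{m_i}$. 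So: as a plan your proposal is sound and faithful to the source it would have to follow, but it is not yet a proof of the harder implication.
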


\end{de}

\section{Projective toric hypersurfaces admitting additive action}

The goal of this section is to describe projective toric hypersurfaces admitting an additive action.

Any hypersurface in a projective space is either a hyperplane or is not contained in any hyperplane. So any toric hypersurface is either hyperplane, and so isomorphic to a projective space, or it can be given by a lattice polytope as described above. It is well known that a projective space is a toric variety and it admits an additive action. So we will study only the second case.

Let $T$ be a torus and $M$ be the lattice of characters of $T$. Consider a full dimension very ample lattice polytope $P \subseteq M_{\BQ}$. Let $P\cap M = \{m_1, \ldots, m_{s+1}\}$ and  $X_P \subseteq \BP^{s}$ be the corresponding projective toric variety. Denote by $n$ the dimension of $X$ which is equal to dimension of $T$ and to rank of the lattice $M$. Since the only projective toric variety of dimension 1 is the projective line we will study only the case $n \geq 2$. Suppose that $X_P$ is a hypersurface in $\BP^{s}$. Then we have $n = s - 1$. So the polytope $P$ contains $n+2$ lattice points. 

Suppose $X_P$ admits an additive action. Then $P$ is inscribed in a rectangle. 

\begin{prop} \label{Polytopes} Let $M$ be the lattice of dimension $n$ with $n\geq 2$. Then up to translation and choosing basis of $M$ there are only two full dimensional lattice polytopes which are inscribed in rectangle and have $n+2$ lattice points. They are
$$A(n) = \mathrm{conv} \left(0, 2e_1, e_2, \ldots, e_n\right)$$
and
$$B(n) = \mathrm{conv}\left(0, e_1, \ldots, e_n, e_1 + e_2\right).$$

where $e_1, \ldots, e_n$ is a basis of $M$.
\end{prop}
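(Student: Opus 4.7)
The plan is to exploit the two clauses of Definition \ref{ir} together with a lattice-point count on the edges emanating from the distinguished vertex. After translating so that this vertex is $v_0 = 0$, clause (1) lets us choose a basis of $M$ in which the vertices of $P$ adjacent to $0$ take the form $k_1 e_1, \ldots, k_n e_n$ with integers $k_i \geq 1$. Each edge $[0, k_i e_i]$ contains $k_i + 1$ lattice points, so $P$ contains at least $1 + \sum_{i=1}^n k_i$ distinct lattice points. The hypothesis $|P \cap M| = n+2$ combined with $k_i \geq 1$ forces $\sum_{i=1}^n k_i \in \{n, n+1\}$, and this dichotomy drives the whole argument.

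If $\sum k_i = n+1$, then some $k_i$ (say $k_1$) equals $2$ and the others equal $1$, so the $n+2$ lattice points $\{0, e_1, 2e_1, e_2, \ldots, e_n\}$ already exhaust $P \cap M$. Since the vertex set of $P$ consists of lattice points, contains $\{0, 2e_1, e_2, \ldots, e_n\}$, and omits $e_1$ (which lies in the relative interior of the edge $[0, 2e_1]$), we conclude that $P = A(n)$.

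If $\sum k_i = n$, then all $k_i = 1$ and $P \cap M = \{0, e_1, \ldots, e_n, w\}$ for a unique extra lattice point $w$. Since the tangent cone of $P$ at $0$ is the non-negative orthant, $w$ has coordinates $(w_1, \ldots, w_n) \in \BZ_{\geq 0}^n$; moreover $w$ cannot be a positive multiple of a single $e_i$ (otherwise either $w \in \{e_1, \ldots, e_n\}$ is already counted, or $e_i$ lies in the interior of $[0, w]$ and fails to be a vertex). Put $S := \{i : w_i > 0\}$, so $|S| \geq 2$. If $|S| = 2$, WLOG $S = \{1, 2\}$ with $w_1, w_2 \geq 1$; an explicit convex combination shows that $e_1 + e_2 \in P$, and the uniqueness of the extra lattice point then forces $w = e_1 + e_2$, giving $P = B(n)$.

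The remaining subcase $|S| \geq 3$ is where clause (2) finally intervenes, and this is the main obstacle. Assuming $1, 2, 3 \in S$, I will exhibit the facet $F$ of $P$ spanned by the $n$ vertices $\{w, e_1, e_2, e_4, \ldots, e_n\}$ (omitting $e_3$) and compute its outer normal to be $p = (1, 1, p_3, 1, \ldots, 1)$ with $p_3 = (1 - \sum_{i \neq 3} w_i)/w_3$; since $w_1 + w_2 \geq 2$, we get $p_3 < 0$, violating clause (2). Care is needed to confirm that the chosen hyperplane actually supports $P$, which follows from $\sum_i w_i \geq 3$ ensuring $p_3 \leq 1$, and that $F$ is $(n-1)$-dimensional, which uses $w_3 > 0$. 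It then remains only to verify by direct inspection that $A(n)$ and $B(n)$ each satisfy both clauses of Definition \ref{ir} and contain exactly $n+2$ lattice points.
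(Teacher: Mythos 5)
Your argument is correct and follows essentially the same route as the paper: place the distinguished vertex at the origin, note that the adjacent vertices are $k_1e_1,\ldots,k_ne_n$, split according to whether $\sum k_i$ equals $n+1$ or $n$, and use the sign condition on facet normals from Definition~\ref{ir}(2) to pin the extra lattice point down to $e_1+e_2$. The only organizational difference is in the second case, where you subdivide on the support of $w$ (handling $|S|=2$ by pure convexity and $|S|\ge 3$ by exhibiting an explicit facet whose normal has a negative entry), while the paper runs a single facet-normal computation for a facet through $w$ and $n-1$ of the $e_i$; both versions are sound.
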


\begin{proof}

Let $P$ be a polytope which satisfies the conditions of Proposition \ref{Polytopes}. Denote by $v_0$ the vertex of $P$ and by $e_1 \ldots, e_n$ the basis of $M$ from Definition \ref{ir}. We can assume that $v_0$ is the origin of $M$. Then $P$ has vertices
$$v_0 = (0, 0, \ldots, 0),\  v_1 = (k_1, 0, \ldots, 0),\  v_2 = (0, k_2, \ldots, ),\ldots, \  v_n = (0, 0, \ldots, k_n),$$
where $k_1, \ldots k_n$ are positive integers (all coordinates are written in the basis $e_1, \ldots, e_n$).

It is well known that a polytope of dimension $n$ has at least $n+1$ vertices. So either $P$ has $n+1$ vertices and there is a lattice point in $P$ which is not a vertex or $P$ has $n+2$ vertices and there is no lattice point in $P$ except vertices.

Consider the first case. Then $P$ has no vertices except $v_0, \ldots, v_n$. Since the convex hull of points $(0, 0, \ldots, 0), (1, 0, \ldots, 0), \ldots (0, 0, \ldots, 1)$ contains only $n+1$ lattice points, there is $i$ such that $k_i > 1$. Then $k_i = 2$ and $k_j = 1$ for all $j \neq i$. We can assume that $k_1 = 2$. Then $P$ coincides with the polytope $A(n)$. This polytope is given by the inequalities 

$$
\begin{cases}\begin{aligned}
x_i & \geq 0,\ i=1,\ldots,n,\\
x_1 &+ 2x_2 + 2x_3 + \ldots + 2x_n \leq  2.\\
\end{aligned}\end{cases}
 $$

The vertex $(0, 0, \ldots, 0)$ satisfies conditions of Definition \ref{ir} and only lattice points $(0, 0, \ldots, 0), (1, 0, \ldots, 0), (0, 1, \ldots, 0), \ldots, (0, 0, \ldots, 1), (0, 2, \ldots 0)$ belong to $A(n)$. So $A(n)$ is a lattice polytope inscribed in rectangle with $n+2$ lattice points. 

Consider the second case. Here $k_1 = k_2 = \ldots = k_n = 1$ and $P$ has vertices $v_0, v_1, \ldots, v_n$ and some vertex $b = (b_1, \ldots, b_n)$. Since $P$ is inscribed in rectangle, the coordinates $b_i$ are non-negative integers. There is a facet $F$ containing $b$ and $n-1$ vertices among $v_1, \ldots, v_n$. We can assume that these vertices are $v_2, \ldots, v_n$. Consider the corresponding inequality $\left<p, x\right> \leq a$, where $p\in N$ and $a$ is a positive integer. Let $p = \alpha_1 e^1 + \ldots + \alpha_n e^n$ where $\alpha_i$ are non negative rational numbers and $e^1, \ldots e^n$ is the basis dual to $e_1,\ldots, e_n$. We have
$$\left<p, v_2\right> = \ldots = \left<p, v_n\right> = a.$$
This implies 
$$\alpha_2 = \ldots = \alpha_n = a.$$
So we obtain
$$\left<p, b\right> = \alpha_1b_1 + ab_2 + \ldots + ab_n = a.$$

If $b_2 = \ldots = b_n = 0$ then $b=(b_1, 0, \ldots, 0)$ and either $b = v_1$ or $v_1$ is not a vertex. Both cases contradict our assumptions. So there is $i>1$ such that $b_i \neq 0$. We can assume that $b_2 \neq 0$. It is easy to see that it implies $b_2 = 1$ and $b_3 = \ldots = b_n = 0$. Then we have $b_1 \neq 0$ (in other case $b = v_2$). If $b_1>1$ then there is a lattice point $(1, 1, \ldots, 0)$ in $P$ except $b, v_0, \ldots, v_n$. So $b_1 = 1$.

Therefore $P$ coincides with $B(n)$. The polytope $B(n)$ is given by the following set of inequalities:

$$
\begin{cases}\begin{aligned}
x_i & \geq 0,\ i = 1,\ldots, n,\\
x_1 &+ x_3 + \ldots + x_n \leq  1,\\
x_2 &+ x_3 + \ldots + x_n \leq 1.
\end{aligned}\end{cases}
 $$

We see that the vertex $(0, 0, \ldots, 0)$ meets conditions from Definition \ref{ir} and no lattice point other than $(0, 0, \ldots, 0), (1, 0, \ldots, 0) \ldots, (0, 0, \ldots, 1)$ and  $(1,1,0,\ldots, 0)$ satisfies these inequalities. So $B(n)$ is also a lattice polytope inscribed in rectangle with $n+2$ lattice points. 

\end{proof}

It is left to show that polytopes $A(n)$ and $B(n)$ are very ample. 

\begin{prop}
Polytopes $A(n)$ and $B(n)$ are very ample.
\end{prop}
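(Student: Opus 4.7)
The plan is to verify the definition of very ampleness vertex-by-vertex: for each vertex $v$ of $A(n)$ or $B(n)$, I show that the semigroup $S_{P,v}$ generated by $(P\cap M)-v$ is saturated by exhibiting the rational tangent cone $\sigma_v=\mathrm{cone}(P-v)$ explicitly and checking that every lattice point of $\sigma_v$ is a non-negative integer combination of the generators. Most vertices reduce to a smooth situation: at $v=0$ in either polytope the semigroup contains the standard basis $e_1,\ldots,e_n$; at $v=2e_1$ in $A(n)$ the primitive edge vectors $-e_1,\,e_2-2e_1,\ldots,e_n-2e_1$ have determinant $\pm1$; and at $v=e_1,e_2,e_1+e_2$ in $B(n)$ the primitive edge vectors also form a $\mathbb{Z}$-basis of $M$ (for instance at $v=e_1$ the vectors are $-e_1,\,e_2,\,e_j-e_1$ for $j\geq 3$), while the remaining semigroup generators such as $e_2-e_1=e_2+(-e_1)$ are manifest non-negative combinations of these basis elements.

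Only two families of vertices require real work. For $A(n)$ at $v=e_i$ with $i\geq 2$, the primitive edge vectors $-e_i,\,2e_1-e_i,\,e_k-e_i$ (for $k\neq i$, $k\geq 2$) span a simplicial cone of index $2$, so the sub-semigroup they generate fails to be saturated. The resolution is the extra generator $e_1-e_i$ coming from the lattice point $e_1\in A(n)\cap M$ (the midpoint of the edge $0\to 2e_1$). The unique rational representation of $w\in\sigma_v\cap M$ in terms of the primitive edges forces the coefficient on $2e_1-e_i$ to be $w_1/2$; when $w_1$ is even all coefficients are integers, and when $w_1$ is odd I insert one copy of $e_1-e_i$ and reduce the coefficient on $2e_1-e_i$ to $(w_1-1)/2$. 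The remaining coefficient $-w_i-(w_1+1)/2-\sum_{k\geq 3}w_k$ on $-e_i$ is then a non-negative integer: the cone inequality gives the fractional bound $-w_i\geq w_1/2+\sum w_k$, and since $-w_i$ is an integer while the right-hand side is a half-integer, the bound improves to $-w_i\geq(w_1+1)/2+\sum w_k$.

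For $B(n)$ at $v=e_j$ with $j\geq 3$, the vertex is non-simple with $n+1$ edges (to $0$, $e_1$, $e_2$, to $e_i$ for $3\leq i\leq n$, $i\neq j$, and to $e_1+e_2$), so $\sigma_v$ is non-simplicial with $n+1$ extremal rays. Translating the polytope's two defining inequalities $x_1+x_3+\cdots+x_n\leq 1$ and $x_2+x_3+\cdots+x_n\leq 1$ gives the facet inequalities $y_1+\sum_{k\geq 3}y_k\leq 0$ and $y_2+\sum_{k\geq 3}y_k\leq 0$ of $\sigma_v$, together with $y_i\geq 0$ for $i\neq j$. For $w\in\sigma_v\cap M$ I parameterize the expansion
\[
w=\alpha_0(-e_j)+\alpha_1(e_1-e_j)+\alpha_2(e_2-e_j)+\sum_{k\neq 1,2,j,\,k\geq 3}w_k(e_k-e_j)+\beta(e_1+e_2-e_j)
\]
by the multiplicity $\beta$ on the ``non-edge'' generator $e_1+e_2-e_j$, which fixes $\alpha_1=w_1-\beta$, $\alpha_2=w_2-\beta$, and $\alpha_0=\beta-w_1-w_2-\sum_{k\geq 3}w_k$. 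The non-negativity of $\alpha_0,\alpha_1,\alpha_2$ amounts to $\beta\in\bigl[\max(0,\,w_1+w_2+\sum_{k\geq 3}w_k),\;\min(w_1,w_2)\bigr]$; the two facet inequalities are exactly what ensures this interval is non-empty, and since its endpoints are integers, it contains an integer $\beta$, yielding a representation of $w$ in $S_{P,v}$.

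The main obstacle is this last case: the tangent cone at $e_j$ in $B(n)$ is non-simplicial, so the generator $e_1+e_2-e_j$ is not redundant in the rational cone and one cannot simply discard it. The correct integer multiplicity of this generator must be pinned down using both sum-inequalities of the polytope simultaneously, whereas every other vertex in either polytope reduces to a smooth or index-$2$ simplicial computation.
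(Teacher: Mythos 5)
Your proof is correct, but it takes a genuinely different route from the paper. The paper never checks saturation vertex by vertex: it passes to the stronger property of normality, $(kP)\cap M+(lP)\cap M=((k+l)P)\cap M$, observes that $A(n)$ and $B(n)$ are iterated pyramids over their two-dimensional instances, and induces on $n$, with base case the fact that every lattice polygon is normal and inductive step the claim that a pyramid $\mathrm{conv}\left(P\times\{0\},(0,\ldots,0,1)\right)$ over a normal polytope is normal (the extra coordinate forces the coefficient of the apex to be an integer, reducing everything to normality of $P$). That argument buys brevity and a stronger conclusion (projective normality of the two embeddings), at the price of citing two results from Cox--Little--Schenck. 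Your argument buys self-containedness and makes the local geometry visible: the smooth tangent cones at most vertices, the index-$2$ simplicial cones of $A(n)$ at $e_i$ ($i\geq 2$) where the midpoint $e_1$ of the long edge is exactly what saturates the semigroup, and the non-simple vertices $e_j$ ($j\geq 3$) of $B(n)$ with $n+1$ extremal rays, where one must choose an integer multiplicity $\beta$ of $e_1+e_2-e_j$ in the interval cut out by the two translated facet inequalities. Both hard cases check out: the interval $\bigl[\max(0,\,w_1+w_2+\textstyle\sum_k w_k),\,\min(w_1,w_2)\bigr]$ is non-empty with integer endpoints precisely because of the two facet inequalities, and the parity improvement from $-w_i\geq w_1/2+\sum w_k$ to $-w_i\geq(w_1+1)/2+\sum w_k$ for odd $w_1$ is valid. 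The only blemishes are notational: the sums you write as $\sum_{k\geq 3}w_k$ in the $A(n)$ computation should run over $k\geq 2$, $k\neq i$ (you have implicitly normalized $i=2$), and in the $B(n)$ computation the sum in the formula for $\alpha_0$ must include the index $j$ to match your coordinate bookkeeping; neither affects the argument.
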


\begin{proof}
We recall that a lattice polytope $P\subseteq M$ is called \emph{normal} if

$$(kP)\cap M + (lP)\cap M = ((k+l)P)\cap M$$
for all $k,l \in \mathbb{N}$. Every normal polytope is very ample; see \cite[Proposition 2.2.17]{COX}.

We prove by induction by $n$ that polytopes $A(n)$ and $B(n)$ are normal. The base case is $n = 2$.  But every lattice polygon is normal  by \cite[Corollary 2.2.12]{COX}. 

Now we come to the inductive step. One can see that $$A(n+1) = \mathrm{conv} \left(A(n)\times\{0\}, \left(0, \ldots, 0, 1\right)\right)$$ and $$B(n+1) = \mathrm{conv} \left(B(n)\times\{0\}, \left(0, \ldots, 0, 1\right)\right).$$

Let us prove that if a lattice polytope $P\subseteq M$ is normal then the lattice polytope $$\overline{P} = \mathrm{conv} \left(P\times\{0\}, \left(0, \ldots, 0, 1\right)\right) \subseteq \overline{M} = M\times\mathbb{Z}$$ is normal as well. By \cite[Lemma 2.2.13]{COX} it is enough to prove that the set $\overline{P}\times\{1\}$ generates the semigroup $$\cone\left(\overline{P}\times\{1\}\right) \cap \overline{M}\times \mathbb{Z}.$$

Let $P\cap M = \{m_1, \ldots, m_{s + 1} \}.$ Then 
$$\overline{P}\cap \overline{M} = \{m_1\times \{0\}, \ldots, m_{s+1}\times \{0\}, (0, \ldots, 0, 1) \}.$$ 
We denote by $\overline{m_i}$ the point $m_i\times \{0\} \times \{1\}\in \overline{M}\times \mathbb{Z}$. Then $\overline{P}\times\{1\}$ contains only lattice points $\overline{m_1}, \ldots, \overline{m_{s + 1}}$ and  $(0, \ldots, 1, 1)$. Let $p$ be a lattice point in the cone generated $\overline{P}\times\{1\}$. Then 
$$p = \alpha_1\overline{m_1} + \ldots + \alpha_{s+1}\overline{m_{s+1}} + \alpha_{s+2}(0, \ldots, 1, 1)$$
where $\alpha_i$ are non-negative rational numbers. Only the last term has a non-zero penultimate coordinate.  So $\alpha_{s+2}$ should be an integer. Then $\alpha_1\overline{m_1} + \ldots + \alpha_{s+1}\overline{m_{s+1}}$ should be a lattice point. But due to \cite[Lemma 2.2.13]{COX} the points $\overline{m_1}, \ldots, \overline{m_{s+1}}$ generate all lattice points in the cone generated by them. Then $\alpha_1\overline{m_1} + \ldots + \alpha_{s+1}\overline{m_{s+1}}$ can be expressed as a linear combination of $\overline{m_1} \ldots, \overline{m_{s+1}}$ with non-negative integer coefficients and therefore $p$ belongs to the semigroup generated by $\overline{P}\times\{1\}$. 

\end{proof}

So $A(n)$ and $B(n)$ are very ample polytopes inscribed in rectangle with $n+2$ lattice points. Therefore they define toric hypersurface admitting an additive action. Denote this hypersurfaces by $X_{A(n)}$ and $X_{B(n)}$ respectively. 

The variety $X_{A(n)}$ is the closure of the image of the map $$\varphi_{A(n)}: T \to \mathbb{P}^{n+1},\ \varphi_{A(n)}(t_1, \ldots, t_n) = [1 : t_1: t_1^2: t_2: \ldots: t_n].$$

The image of this map is contained in the quadric $z_0z_2 = z_1^2$, where $z_0, \ldots, z_{n+1}$ are coordinates on $\mathbb{P}^{n+1}$. Since this quadric is irreducible the variety $X_{A(n)}$ coincides with this quadirc. 

The variety $X_{B(n)}$ is the closure of the image of the map $$\varphi_{B(n)}: T \to \mathbb{P}^{n+1},\ \varphi_{B(n)}(t_1, \ldots, t_n) = [1 : t_1: t_2: t_1t_2: \ldots: t_n].$$

Similarly, the variety $X_{B(n)}$ coincides with the quadric $z_0z_3 = z_1z_2$. 

As a corollary, we obtain the main theorem. We denote by $Q(n, k)$ the quadric of dimension $n$ and rank $k$ in $\mathbb{P}^{n+1}$. 

\begin{theor}\label{Mth} Let $X$ be a projective toric hypersurface of dimension $n$ admitting an additive action. Then \begin{enumerate}

\item if $n = 1$ then $X \simeq \mathbb{P}^1$;

\item if $n \geq 2$ then $X$ is isomorphic to $\mathbb{P}^n$, $Q(n, 3)$ or $Q(n, 4)$.
\end{enumerate} 
\end{theor}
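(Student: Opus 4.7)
The plan is to assemble the theorem from the groundwork already laid in the section rather than to do new computations. Every projective toric hypersurface $X \subseteq \BP^{n+1}$ falls into one of two cases: either $X$ is contained in some hyperplane, in which case $X$ is a hyperplane and hence isomorphic to $\BP^n$ (which is a toric variety admitting a standard additive action, so it does appear in the conclusion); or $X$ is not contained in any hyperplane, and then by the correspondence recalled in Section \ref{prem} the variety $X$ equals $X_P$ for some full-dimensional very ample lattice polytope $P \subseteq M_{\BQ}$, unique up to automorphism of $\BP^{n+1}$.

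For the case $n=1$ I would simply note that the only normal complete toric curve is $\BP^1$, which settles part (1) with no further work.

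For $n \geq 2$ and $X = X_P$ not contained in a hyperplane, the hypersurface condition forces $\dim \BP^{n+1} = n+1$, hence $|P \cap M| = n+2$. Since $X$ admits an additive action, the theorem from \cite{[ARRO]} (quoted at the end of Section \ref{prem}) tells us that $P$ is inscribed in a rectangle. At this point Proposition \ref{Polytopes} applies and pins $P$ down, up to translation and change of basis of $M$, to one of the two polytopes $A(n)$ or $B(n)$. The identification of $X_{A(n)}$ and $X_{B(n)}$ with concrete quadrics has just been carried out in the paragraphs preceding the statement of the theorem: the parametrizations $\varphi_{A(n)}$ and $\varphi_{B(n)}$ show that $X_{A(n)}$ sits inside the irreducible quadric $\{z_0 z_2 = z_1^2\}$ and $X_{B(n)}$ sits inside the irreducible quadric $\{z_0 z_3 = z_1 z_2\}$, and in both cases dimension and irreducibility force equality. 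Reading off the ranks of these two quadratic forms gives $X \simeq Q(n,3)$ and $X \simeq Q(n,4)$ respectively.

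In short, no step of the proof is hard by itself: the real content has already been packaged into Proposition \ref{Polytopes} together with the quoted theorem from \cite{[ARRO]}, and the only thing left for the proof of Theorem \ref{Mth} is the bookkeeping that combines the hyperplane/non-hyperplane dichotomy with these two results and with the explicit equations of $X_{A(n)}$ and $X_{B(n)}$. The only point that deserves a short sentence of justification is the identification of $X_{A(n)}$ and $X_{B(n)}$ with the \emph{full} quadrics rather than with proper subvarieties, which follows from irreducibility of the quadrics $\{z_0 z_2 = z_1^2\}$ and $\{z_0 z_3 = z_1 z_2\}$ together with the equality of dimensions.
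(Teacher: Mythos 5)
Your proposal is correct and follows essentially the same route as the paper, which presents Theorem \ref{Mth} as a direct corollary of the hyperplane/non-hyperplane dichotomy, the theorem from \cite{[ARRO]}, Proposition \ref{Polytopes}, and the explicit identification of $X_{A(n)}$ and $X_{B(n)}$ with the irreducible quadrics $\{z_0z_2=z_1^2\}$ and $\{z_0z_3=z_1z_2\}$. The only ingredient you do not mention is the proposition that $A(n)$ and $B(n)$ are very ample, but that is needed only to show the quadrics genuinely occur, not for the implication as stated.
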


\begin{re} 
If a complete toric variety $X$ admits an additive action then there is an additive action which is normalized by the acting torus, and any two normalized additive actions on $X$ are isomorphic; see \cite[Theorem 3]{[ARRO]}.

The normalized additive action on $Q(n, 3)$ has the form

$$(a_1, \ldots, a_n) \circ [z_0: z_1: z_2 : z_3: \ldots: z_{n+1}] = $$
$$=  [z_0: z_1 + a_1z_0:z_2 + 2a_1z_1 + a_1^2z_0: z_3 + a_2z_0: \ldots :z_{n+1} + a_nz_0],$$
where $(a_1, a_2, \ldots, a_n) \in \mathbb{G}_a^n$.

In turn, the normalized additive action on $Q(n, 4)$ has the form

$$(a_1, \ldots, a_n) \circ [z_0: z_1: z_2 : z_3: z_4: \ldots: z_{n+1}] = $$
$$= [z_0: z_1 + a_2z_0:z_2 + a_1z_0:z_3 + a_1z_1 + a_2z_2 + a_1a_2z_0: z_4 + a_3z_0: \ldots :z_{n+1} + a_nz_0].$$

\end{re}

\section{Automorphism groups of quadrics of rank 3 and 4}\label{Aut}

In this section we describe the automorphism group of quadrics of rank 3 and 4.  With any toric variety $X$ one can associate a fan $\Sigma = \Sigma_X$ of rational polyhedral cones in $N_{\mathbb{Q}}$; see \cite{COX, FUL}. Let $\Sigma_X(1) =  \{\rho_1, \ldots, \rho_m\}$ be the set of one-dimensional cones in $\Sigma_X$.  Each $\rho_i$ corresponds to a prime $T$-invariant Weil divisor $D_{i}$ in $X$. We denote by $[D_{i}]$ the class of $D_{i}$ in the divisor class group  $\mathrm{Cl} (X)$. One can find detailed information on the correspondence $\rho_i \to D_{i}$ and on the divisor class group of toric varieties in \cite[Chapter 4]{COX}.

The \emph{Cox ring} $R(X)$ of a toric variety $X$ is a polynomial algebra $\mathbb{K}[x_1, \ldots, x_m]$ with $\mathrm{Cl}(X)$-grading given by $\mathrm{deg}\ x_i = [D_{\rho_i}]$; see \cite[\S 1]{COX2} and \cite[Chapter 3]{ADHL} for a general definition of Cox ring of a normal variety with finitely generated class group. 

The Cox ring $R(X) = \mathbb{K}[x_1, \ldots, x_m]$ is a direct sum

$$\bigoplus_{a\in \mathrm{Cl}(X)} S_a,$$
 where each $S_a$ is a linear span of monomials of degree $a$. Denote by $\mathrm{Aut}_g (R(X))$ the subgroup in the group of automorphisms of $R(X)$ which consits of automorphisms $\psi$ such that $\psi(S_a) = S_a$ for all $a \in \mathrm{Cl} (X)$. Let $H_X$ be the group $\mathrm{Hom} (\mathrm{Cl} (X), \mathbb{K^*})$. Then $H_X$ is a direct product of a torus and a finite abelian group. The grading of $R(X)$ by $\mathrm{Cl}(X)$ defines an action of $H_X$ on $R(X)$. This action induces an embedding of $H_X$ into $\mathrm{Aut}_g (R(X))$ and $H_X$ is a central subgroup of $\mathrm{Aut}_g (R(X))$ with respect to this embedding.

Now suppose that $X$ is a complete toric variety. Let $\mathrm{Aut} (X)$  be the automorphism group of $X$ and $\mathrm{Aut}^0 (X)$ be the connected component of identity of $\mathrm{Aut} (X)$. It follows from \cite[Chapter 4]{COX2} that $\mathrm{Aut}_g (R(X))$ is an affine algebraic group and the group $\mathrm{Aut}_g (R(X))/H_X$ is isomorphic to $\mathrm{Aut}^0 (X)$.  

Each $S_{a}$ can be written as $S_a = S_a'\bigoplus S_a''$ where $S_a'$ is spanned by variables of degree $a$ and $S_a''$ is spanned by other monomials of degree $a$. Consider the subgroup $G_s \subseteq \mathrm{Aut}_g (R(X))$ which consists of non-degenerate linear transformations of spaces $S_a'$. Then $G_s$ is a reductive group isomorphic to 
$$\prod\limits_{a} \mathrm{GL} (S_a').$$

Now consider automorphisms in $\mathrm{Aut}_g (R(X))$ which adds to each variable $x_i$ some element of $S_a''$ where $a$ is the degree of $x_i$. Such automorphisms form an unipotent subgroup $R_u~\subseteq~\mathrm{Aut}_g (R(X))$ and the group $\mathrm{Aut}_g (R(X))$ is the semidirect product $G_s \ltimes R_u.$ See \cite[Chapter 4]{COX2} for details. 

Now we can apply these facts to compute the automorphism groups of quadrics of rank 3 and 4. To smiplify notation, we denote a projective quadric of rank $k$ as~$Q_k$.

\begin{theor}\label{propaut} 
\begin{enumerate}
\item The group $\mathrm{Aut}^0 (Q_3)$ is isomorphic to the quotient group of the group 
$$(\mathrm{GL}_2(\mathbb{K}) \times \mathrm{GL}_{n-1} (\mathbb{K})) \ltimes \mathbb{G}_a^{3(n-1)}$$
by the subgroup consisting of the following triples 

$$\left(\left(\begin{matrix} 
t & 0 \\
0 & t
\end{matrix}\right), 
\left(\begin{matrix}
t^2 & 0  & \ldots & 0\\
0 & t^2 &\ldots & 0\\
\vdots& \vdots& &\vdots \\
0 & 0 & \ldots & t^2\end{matrix}\right), 0
\right),\ t \in \mathbb{K}^*.$$

\item The group $\mathrm{Aut}^0 (Q_4)$ is isomorphic to the quotient group of the group  
$$(\mathrm{GL}_2(\mathbb{K}) \times \mathrm{GL}_{2} (\mathbb{K}) \times \mathrm{GL}_{n-2}(\mathbb{K})) \ltimes \mathbb{G}_a^{4(n-2)}$$
by the subgroup consisting of the following quadruples 

$$\left(\left(\begin{matrix} 
t_1 & 0 \\
0 & t_1
\end{matrix}\right), 
\left(\begin{matrix} 
t_2 & 0 \\
0 & t_2
\end{matrix}\right),
\left(\begin{matrix}
t_1t_2 & 0  & \ldots & 0\\
0 & t_1t_2 &\ldots & 0\\
\vdots& \vdots& &\vdots \\
0 & 0 & \ldots & t_1t_2\end{matrix}\right), 0
\right),\ t_1, t_2 \in \mathbb{K}^*.$$

\end{enumerate}
\end{theor}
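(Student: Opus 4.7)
The plan is to apply the description of $\mathrm{Aut}^0(X)$ via the Cox ring recalled at the start of this section. For each of $Q_3 = X_{A(n)}$ and $Q_4 = X_{B(n)}$ I would (i) read off the rays of the fan as the primitive inward normals to the facets of the defining polytope, (ii) determine $\mathrm{Cl}(X)$ and the grading of the Cox ring $R(X)$ from the exact sequence $0 \to M \to \mathbb{Z}^{m} \to \mathrm{Cl}(X) \to 0$, (iii) partition the Cox variables by degree to obtain $G_s = \prod_a \mathrm{GL}(S_a')$, (iv) enumerate the monomials of each ``variable degree'' that are not themselves variables to compute $R_u$, (v) realise $H_X = \mathrm{Hom}(\mathrm{Cl}(X), \mathbb{K}^*)$ as a subgroup of $G_s$, and (vi) assemble $\mathrm{Aut}^0(X) \cong (G_s \ltimes R_u)/H_X$.

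For $Q_3$, the polytope $A(n)$ has $n+1$ facets ($x_i \geq 0$ for $1 \leq i \leq n$, together with $x_1 + 2x_2 + \ldots + 2x_n \leq 2$), giving Cox variables $x_1, \ldots, x_n, x_{n+1}$. Computing the images of the dual basis of $M$ in $\mathbb{Z}^{n+1}$ should yield $\mathrm{Cl}(X) \cong \mathbb{Z}$ with $\deg x_1 = \deg x_{n+1} = 1$ and $\deg x_j = 2$ for $2 \leq j \leq n$. Hence $G_s \cong \mathrm{GL}_2(\mathbb{K}) \times \mathrm{GL}_{n-1}(\mathbb{K})$. Degree $1$ admits no non-variable monomial, while degree $2$ admits exactly the three $x_1^2, x_1 x_{n+1}, x_{n+1}^2$, each addable to any of the $n-1$ variables $x_2, \ldots, x_n$, giving $R_u \cong \mathbb{G}_a^{3(n-1)}$. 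Finally $H_X \cong \mathbb{K}^*$ acts by $t \cdot x_i = t^{\deg x_i} x_i$, embedding in $G_s$ as $\{(tI_2, t^2 I_{n-1})\}$ and trivially on $R_u$, which is exactly the subgroup to be quotiented.

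For $Q_4$, the polytope $B(n)$ has $n+2$ facets (the $n$ inequalities $x_i \geq 0$ together with $x_1 + x_3 + \ldots + x_n \leq 1$ and $x_2 + x_3 + \ldots + x_n \leq 1$), giving $n+2$ Cox variables which I label $x_1, \ldots, x_n, y_1, y_2$. The analogous calculation should give $\mathrm{Cl}(X) \cong \mathbb{Z}^2$ with $\deg x_1 = \deg y_1 = (1,0)$, $\deg x_2 = \deg y_2 = (0,1)$ and $\deg x_j = (1,1)$ for $3 \leq j \leq n$. Thus $G_s \cong \mathrm{GL}_2(\mathbb{K}) \times \mathrm{GL}_2(\mathbb{K}) \times \mathrm{GL}_{n-2}(\mathbb{K})$; the pure degrees $(1,0)$ and $(0,1)$ contain only their two variables, while $(1,1)$ contains the four extra monomials $x_1 x_2, x_1 y_2, y_1 x_2, y_1 y_2$, yielding $R_u \cong \mathbb{G}_a^{4(n-2)}$. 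The 2-torus $H_X \cong (\mathbb{K}^*)^2$ embeds as $\{(t_1 I_2, t_2 I_2, t_1 t_2 I_{n-2})\}$, which is exactly the subgroup in the statement.

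I expect the main obstacle to be purely bookkeeping: handling the signs coming from the inward normals in the class-group presentation, and checking that the list of non-variable monomials in the ``mixed'' degree (degree $2$ for $Q_3$, degree $(1,1)$ for $Q_4$) is exhaustive. Once the grading is in hand, the shape of $G_s$, the dimension of $R_u$ and the embedding of $H_X$ are all forced by the recipe quoted from \cite{COX2}.
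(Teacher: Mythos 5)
Your proposal is correct and follows essentially the same route as the paper: reading off the rays from the facet normals of $A(n)$ and $B(n)$, computing $\mathrm{Cl}(X)$ and the grading of the Cox ring, identifying $G_s$, $R_u$ and $H_X$, and forming the quotient $(G_s \ltimes R_u)/H_X$. The gradings, the counts of non-variable monomials in the mixed degrees, and the embedding of $H_X$ all match the paper's computation, so there is nothing to add.
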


\begin{proof}

Since for a complete toric variety $\mathrm{Aut}^0 (X)$ is isomorphic to the quotient group $\mathrm{Aut}_g (R(X))/H_X$ it is enough to find $\mathrm{Aut}_g (R(X))$ and $H_X$.

The set $\Sigma_{Q_3}(1)$ consits of rays which are orthogonal to faces of the polytope $A(n)$. The faces of this polytope are given as an intersection of hypersurfaces $x_i = 0$ where $i = 1,\ldots n$ and $x_1 + 2x_2 + \ldots + 2x_n = 2$ with the polytope $A(n)$. So we obtain 
$$\rho_1 = (1, 0, \ldots, 0),\ \rho_2 = (0, 1, \ldots, 0), \ldots, \rho_{n} = (0, 0, \ldots, 1), \rho_{n+1} = (-1, -2, \ldots, -2).$$

Then $R(Q_3)$ is the algebra $\mathbb{K}[x_1, \ldots, x_{n+1}]$. Direct calculations show that $\mathrm{Cl} (Q_3) = \mathbb{Z}$ and we have $\mathrm{deg}\ x_1 = \mathrm{deg}\ x_{n+1} = 1,$ $\mathrm{deg}\ x_2 = \ldots = \mathrm{deg}\ x_n = 2.$ So a space $S_a'$ is nonzero only if $a = 1$ or $a = 2$. The dimension of the space $S_1'$ is equal to $2$ and the dimension of the space $S_2'$ is equal to $n-1$. Therefore, the group $G_s$ is isomorphic to  $\mathrm{GL}_2(\mathbb{K}) \times \mathrm{GL}_{n-1} (\mathbb{K}).$ The group $R_u$ consists of automorphisms which adds to variables $x_2, \ldots, x_n$ a homogeneous polynomial of degree 2 in variables $x_1$ and $x_{n+1}$. This group is isomorphic to $\mathbb{G}_a^{3(n-1)}$. So  $\mathrm{Aut}_g (R(Q_3)) $ is isomorphic to

$$(\mathrm{GL}_2(\mathbb{K}) \times \mathrm{GL}_{n-1} (\mathbb{K})) \ltimes \mathbb{G}_a^{3(n-1)}.$$

Since $\mathrm{Cl} (Q_3)$ is isomorphic to $\mathbb{Z}$ then the group $H_{Q_3}= \mathrm{Hom} (\mathrm{Cl} (Q_3), \mathbb{K^*})$ is a one-dimensional torus $\mathbb{K}^*$. An element $t \in H_{Q_3}$ acts on variables $x_1$ and $x_{n+1}$ by multiplication by $t$ and on variables $x_2, \ldots, x_n$ by multiplication on $t^2$. So $H_{Q_3}$ is a subgroup of $(\mathrm{GL}_2(\mathbb{K}) \times \mathrm{GL}_{n-1} (\mathbb{K})) \ltimes \mathbb{G}_a^{3(n-1)}$ which consits of the following triples 

$$\left(\left(\begin{matrix} 
t & 0 \\
0 & t
\end{matrix}\right), 
\left(\begin{matrix}
t^2 & 0  & \ldots & 0\\
0 & t^2 &\ldots & 0\\
\vdots& \vdots& &\vdots \\
0 & 0 & \ldots & t^2\end{matrix}\right), 0
\right),\ t \in \mathbb{K}^*.$$

Now consider $Q_4$.The faces of $B(n)$ are given as an intersection of $B(n)$ with hypersurfaces $x_i = 0$ where $i = 1,\ldots n,$ and hypersurfaces $x_1 + x_3 + \ldots + x_n = 1$ and $x_2 + x_3 + \ldots + x_n = 1$. 
So the set $\Sigma_{Q_4} (1)$ consists of vectors 
$$\rho_1 = (1, 0, \ldots, 0),\ \rho_2 = (0, 1, \ldots, 0), \ldots \rho_n = (0, 0, \ldots, 1),$$ 
$$\rho_{n+1} = (-1, 0, -1, \ldots, -1),\ \rho_{n+2} = (0, -1,  -1, \ldots, -1).$$

Then $R(Q_4)$ is the algebra $\mathbb{K}[x_1, \ldots, x_{n+2}]$ and $\mathrm{Cl} (Q_4) = \mathbb{Z}\oplus \mathbb{Z}$. Up to automorphism of $\mathbb{Z}\oplus \mathbb{Z}$ one can assume that $\mathrm{deg} x_1 = \mathrm{deg} x_{n+1} = (1, 0), \mathrm{deg}\ x_2 = \mathrm{deg}\ x_{n+2} = (0, 1),$ and 
$$\mathrm{deg} x_3 = \mathrm{deg} x_4 = \ldots \mathrm{deg} x_n = (1, 1).$$

So a space $S_a'$ is nonzero only if $a$ is equal to $(1, 0), (0,1)$ or $(1,1)$. The dimension of $S_{(1,0)}'$ and $S_{(0,1)}'$ is $2$ and the  dimension of $S_{(1,1)}'$ is $n-2$. So the group $G_s$ is isomorphic to 
$$\mathrm{GL}_2(\mathbb{K}) \times \mathrm{GL}_{2} (\mathbb{K}) \times \mathrm{GL}_{n-2}(\mathbb{K}).$$

The group $R_u$ consists of automorphisms which adds to variables $x_3, \ldots, x_n$ a linear combination of monomials $x_1x_2, x_1x_{n+2}, x_{n+1}x_2$ and $x_{n+1}x_{n+2}$. This group is isomorphic to~$\mathbb{G}_a^{4(n-2)}.$ Then the group $\mathrm{Aut}_g (R(Q_4))$ is isomorphic to

$$(\mathrm{GL}_2(\mathbb{K}) \times \mathrm{GL}_{2} (\mathbb{K}) \times \mathrm{GL}_{n-2}(\mathbb{K})) \ltimes \mathbb{G}_a^{4(n-2)}.$$

Since the group $H_{Q_4}$ is a two-dimensional torus $(\mathbb{K}^*)^2$  and an element $(t_1, t_2) \in H_{Q_4}$ acts on variables $x_1$ and $x_{n+1}$ by multiplication by $t_1$, on variables $x_2$ and $x_{n+2}$ by multiplication by $t_2$ and on variables $x_3, \ldots, x_n$ by multiplication by $t_1t_2$. Therefore the subgroup $H_{Q_4}$  consists of the following quadruples 

$$\left(\left(\begin{matrix} 
t_1 & 0 \\
0 & t_1
\end{matrix}\right), 
\left(\begin{matrix} 
t_2 & 0 \\
0 & t_2
\end{matrix}\right),
\left(\begin{matrix}
t_1t_2 & 0  & \ldots & 0\\
0 & t_1t_2 &\ldots & 0\\
\vdots& \vdots& &\vdots \\
0 & 0 & \ldots & t_1t_2\end{matrix}\right), 0
\right),\ t_1, t_2 \in \mathbb{K}^*.$$

\end{proof}

To complete the computations of the automorphism groups of $Q_3$ and $Q_4$ it is left to find the quotient groups $\mathrm{Aut}(Q_3)/\mathrm{Aut}^0(Q_3)$ and $\mathrm{Aut}(Q_4)/\mathrm{Aut}^0(Q_4)$.

\begin{prop}
The quotient group $\mathrm{Aut}(Q_3)/\mathrm{Aut}^0(Q_3)$ is trivial. The quotient group $\mathrm{Aut}(Q_4)/\mathrm{Aut}^0(Q_4)$ is isomorphic to $\mathbb{Z}/2\mathbb{Z}$.
\end{prop}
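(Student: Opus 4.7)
The plan is to invoke the extended Cox ring description of $\mathrm{Aut}(X)$ for a complete toric variety from $\cite{COX2}$: beyond the connected component $\mathrm{Aut}^0(X) = \mathrm{Aut}_g(R(X))/H_X$ computed in Theorem \ref{propaut}, the full group $\mathrm{Aut}(X)$ is obtained by also adjoining the combinatorial symmetries of the fan $\Sigma_X$. More precisely, $\mathrm{Aut}(X)/\mathrm{Aut}^0(X)$ is canonically isomorphic to the image of the natural homomorphism $\mathrm{Aut}(N,\Sigma_X) \to \mathrm{Aut}(\mathrm{Cl}(X))$, where $\mathrm{Aut}(N,\Sigma_X)$ consists of $\mathbb{Z}$-linear automorphisms of $N$ preserving the set of cones of $\Sigma_X$. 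Any symmetry acting trivially on $\mathrm{Cl}(X)$ permutes Cox variables within each graded piece $S_a'$, hence is realized by a permutation matrix in the connected group $G_s$ and already lies in $\mathrm{Aut}^0(X)$.

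For $Q_3$, the class group is $\mathbb{Z}$ and the Cox variables have degrees $1$ and $2$. If $\sigma_* \in \mathrm{Aut}(\mathbb{Z}) = \{\pm 1\}$ is induced by a fan symmetry $\sigma$, then $\sigma_*(1) = [D_{\sigma(1)}] \in \{1,2\}$; the choice $\sigma_*(1) = 2$ would force $\sigma_*(2) = 4$, which is not a degree of any Cox variable. Thus $\sigma_* = \mathrm{id}$ for every fan symmetry, the image is trivial, and $\mathrm{Aut}(Q_3)/\mathrm{Aut}^0(Q_3)$ is trivial.

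For $Q_4$, the class group is $\mathbb{Z}^2$ and the Cox variables have degrees $(1,0)$ and $(0,1)$ (each of multiplicity $2$), together with $(1,1)$ (of multiplicity $n-2$). An elementary case analysis in $\mathrm{GL}_2(\mathbb{Z})$ shows that the only nontrivial automorphism preserving this degree multiset is the involution $\tau$ swapping $(1,0) \leftrightarrow (0,1)$ (which automatically fixes $(1,1) = (1,0) + (0,1)$); any other candidate violates the multiset, for instance sending $(1,0)$ to $(1,1)$ forces $(1,1) \mapsto (2,1)$, which is not a Cox degree. To see that $\tau$ is realized, take the lattice involution $\varphi \in \mathrm{GL}(N)$ with $\varphi(e_1) = e_2$, $\varphi(e_2) = e_1$, and $\varphi(e_j) = e_j$ for $j \ge 3$; from the ray description of $\Sigma_{Q_4}$ in the proof of Theorem \ref{propaut} one checks directly that $\varphi$ swaps $\rho_1 \leftrightarrow \rho_2$ and $\rho_{n+1} \leftrightarrow \rho_{n+2}$ while fixing $\rho_3,\ldots,\rho_n$, hence preserves the fan. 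Therefore $\varphi \in \mathrm{Aut}(N, \Sigma_{Q_4})$ induces $\tau$, giving $\mathrm{Aut}(Q_4)/\mathrm{Aut}^0(Q_4) \cong \mathbb{Z}/2\mathbb{Z}$.

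The main obstacle is the careful extension of the Cox ring formalism from $\cite{COX2}$ beyond the connected component so as to identify $\mathrm{Aut}(X)/\mathrm{Aut}^0(X)$ with the image of $\mathrm{Aut}(N,\Sigma_X)$ in $\mathrm{Aut}(\mathrm{Cl}(X))$; once granted, the case analysis in $\mathrm{GL}_n(\mathbb{Z})$ and the explicit realization of $\tau$ via $\varphi$ are elementary.
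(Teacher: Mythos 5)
Your proposal is correct and follows essentially the same route as the paper: both invoke \cite[Corollary 4.7]{COX2} to reduce $\mathrm{Aut}(X)/\mathrm{Aut}^0(X)$ to fan symmetries modulo those fixing all divisor classes, rule out nontrivial degree permutations for $Q_3$ and all but the swap $(1,0)\leftrightarrow(0,1)$ for $Q_4$ by the same multiset bookkeeping, and realize that swap by the lattice involution exchanging $e_1$ and $e_2$. No substantive difference from the paper's argument.
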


\begin{proof}
For a complete toric variety $X$ with a fan $\Sigma$ we denote by $\mathrm{Aut} (N, \Sigma)$ the group of  automormisms  of the lattice $N$ which preserve the fan $\Sigma$ and by $\mathrm{Aut}^0 (N, \Sigma)$ the subgroup 

$$\mathrm{Aut}^0 (N, \Sigma) = \left\{\varphi \in \mathrm{Aut} (N, \Sigma)|\ [D_{\varphi(\rho)}] = [D_{\rho}] \  \text{for all }\rho \in \Sigma(1) \right\}.$$

It follows from \cite[Corollary 4.7]{COX2} that the group $\mathrm{Aut}(X)/\mathrm{Aut}^0(X) $ is isomorphic to $\mathrm{Aut} (N, \Sigma)/\mathrm{Aut}^0 (N, \Sigma)$. 

The divisor class group of $Q_3$ is isomorphic to $\mathbb{Z}$. The set $\Sigma_{Q_3}(1)$ is described in the proof of Proposition \ref{propaut}. Then up to automorphism of $\mathrm{Cl} (Q_3)$ we can assume that $[D_{\rho_1}] = [D_{\rho_{n+1}}] =~1$ and $[D_{\rho_2}] = \ldots = [D_{\rho_n}] = 2$. Since every automorphism $\varphi \in \mathrm{Aut} (N, \Sigma_{Q_3})$ induces an automorphism of $\mathrm{Cl}(Q_3)$, we have $[D_{\varphi(1)}] = [D_{\varphi(\rho_{n+1})}] = 1$. So  $\mathrm{Aut} (N, \Sigma_{Q_3}) = \mathrm{Aut}^0 (N, \Sigma_{Q_3})$ and the quotient group $\mathrm{Aut}(Q_3)/\mathrm{Aut}^0(Q_3)$ is trivial. 

The divisor class group of $X_{Q_4}$ is isomorphic to $\mathbb{Z} \oplus \mathbb{Z}$. Using notations from the proof of Proposition \ref{propaut} we can assume that 
$$[D_{\rho_1}] = [D_{\rho_{n+1}}] = (1, 0),\   [D_{\rho_2}] = [D_{\rho_{n+2}}] = (0, 1),\   [D_{\rho_3}] =  \ldots = [D_{\rho_n}] =  (1,1).$$

Consider an automorphism $\varphi \in \mathrm{Aut} (N, \Sigma_{Q_4})$. Then either we have the case $$[D_{\varphi(\rho_1)}] = [D_{\varphi(\rho_{n+1})}] = (1, 0),\ [D_{\varphi(\rho_2)}] = [D_{\varphi(\rho_{n+2})}] = (0, 1),$$ $$[D_{\varphi(\rho_3)}]  = \ldots = [D_{\varphi(\rho_n})] =  (1,1),$$ or we have the case $$[D_{\varphi(\rho_1)}] = [D_{\varphi(\rho_{n+1})}] = (0, 1),\ [D_{\varphi(\rho_2)}] = [D_{\varphi(\rho_{n+2})}] = (1, 0),$$ $$[D_{\varphi(\rho_3)}]  = \ldots = [D_{\varphi(\rho_n})] =  (1,1).$$

In the first case the automorphism $\varphi$ belongs to the group $\mathrm{Aut}^0 (N, \Sigma_{Q_4})$. The automorphism of $N$, which transposes the basis vectors $e_1$ and $e_2$ and preserves the basis vectors $e_3, \ldots, e_n$, is an example of an automorphism of the second type. Any two automorphisms of the second type differ by an automorphism of the first type. So the quotient groups $\mathrm{Aut}(N, \Sigma_{Q_4})/\mathrm{Aut}^0(N, \Sigma_{Q_4})$ and $\mathrm{Aut}(Q_4)/\mathrm{Aut}^0(Q_4)$ are isomorphic to $\mathbb{Z}/2\mathbb{Z}$.
\end{proof}

\section{Additive actions and local algebras}

In this section we find the number of additive actions on quadrics of rank 3 and 4 when dimension is equal to two, three or four. 

There is a connection between additive actions on projective hypersurfaces and local algebras which is described in \cite{AS, AP}. By a local algebra we mean a finite-dimensional commutative associative algebra with unit over the field $\mathbb{K}$.

Consider a local algebra $R$ of dimension $n+2$ with the maximal ideal $\mathfrak{m}$.  Consider an $n$-dimensional subspace $W \subseteq \mathfrak{m}$ that generates $R$ as an algebra with unit. Then the group $\mathrm{exp} (W)$ is isomorphic to the group $\mathbb{G}_a^n$ and acts on $R$ by multiplication. This action induces an action of $\mathbb{G}_a^n$ on the projective space $\mathbb{P} (R)$. If $a$ is a non-zero element of $R$ then by $\overline{a}$ we denote the image of $a$ in $\mathbb{P}(R)$. Closure of the orbit of $\overline{1}$ is a hypersurface and the group $\mathbb{G}_a^n$ acts with an open orbit on this hypersurface. Every additive action on a projective hypersurface of degree at least 2 can be obtained this way.

Let us formulate this statement more precisely. Let $(R_1, W_1)$ and $(R_2, W_2) $ be two pairs of local $(n+2)$-dimensional algebras $R_1$ and $R_2$ and $n$-dimensional subspaces $W_1$ and $W_2$ in the maximal ideals of these algebras that generate $R_1$ and $R_2$, respectively. We say that these pairs are  isomorphic if there is an isomorphism $\varphi: R_1 \to R_2$ of algebras such that $\varphi(W_1) = W_2$. 

We say that an additive action of the group $\mathbb{G}_a^n$ on a variety $X$ \emph{is equivalent} to an additive action on a variety $Y$ if there is $\mathbb{G}_a^n$-equivariant  isomorphism between $X$ and $Y$.
 
\begin{theor}\cite[Proposition 3]{AP} There is a one-to-one correspondence between \\

\begin{enumerate}

\item equivalence classes of additive actions on hypersurfaces in $\mathbb{P}^{n+1}$ of degree at least two;\\

\item isomorphism classes of pairs $(R, W)$, where $R$ is a local $(n+2)$-dimensional algebra with the maximal ideal $\mathfrak{m}$ and $W$ is a hyperplane in $\mathfrak{m}$ that generates $R$ as an algebra with unit. 

\end{enumerate}
\end{theor}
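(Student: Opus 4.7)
My plan is to exhibit mutually inverse constructions between the two sides of the claimed bijection and verify that they respect the relevant equivalence relations.

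Given a pair $(R, W)$ with $\dim R = n+2$ and $W \subseteq \mathfrak{m}$ an $n$-dimensional subspace generating $R$ as a unital algebra, every element of $W$ is nilpotent, so the exponential series defines a polynomial isomorphism of $W$ onto a commutative unipotent subgroup $G \subseteq R^{\times}$ isomorphic to $\mathbb{G}_a^n$. Left multiplication makes $G$ act linearly on $R$, inducing an action on $\mathbb{P}(R) \simeq \mathbb{P}^{n+1}$. The orbit of $\overline{1}$ has dimension $n$, since the differential of the orbit map at $0 \in W$ is the inclusion $W \hookrightarrow R/\mathbb{K}\cdot 1 = T_{\overline{1}}\mathbb{P}(R)$, which is injective because $W \subseteq \mathfrak{m}$ while $\mathbb{K}\cdot 1 \cap \mathfrak{m} = 0$. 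Its closure is therefore a $G$-invariant hypersurface carrying an additive action of $G$.

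Conversely, let $\mathbb{G}_a^n$ act with open orbit on a hypersurface $X \subseteq \mathbb{P}^{n+1}$ of degree at least two. Since $\mathbb{G}_a^n$ is connected and has no nontrivial characters, the action lifts, uniquely up to a common scalar, to a linear representation on $V = \mathbb{K}^{n+2}$ preserving the affine cone over $X$. Differentiating produces an $n$-dimensional abelian subalgebra $\mathfrak{g} \subseteq \mathrm{End}(V)$ of pairwise commuting nilpotent operators. Let $R \subseteq \mathrm{End}(V)$ be the unital subalgebra generated by $\mathfrak{g}$; then $R$ is commutative and local, with maximal ideal $\mathfrak{m} = \mathfrak{g} \cdot R$. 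Choose a lift $v_0 \in V$ of a point of the open orbit, and consider the evaluation map $\mathrm{ev}: R \to V$, $r \mapsto r v_0$. Surjectivity of $\mathrm{ev}$ follows because $X$ is not contained in any hyperplane (forced by $\deg X \geq 2$), while injectivity follows from openness of the $G$-orbit; together these force $\dim R = n+2$. Setting $W = \mathfrak{g}$, one obtains the required hyperplane in $\mathfrak{m}$.

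Mutual inverseness is a direct check: under the evaluation identification $R \simeq V$ the linear $G$-representation on $V$ becomes left multiplication by $\exp(W)$ on $R$, so the associated projective action is precisely the one built from $(R, W)$. Equivalence of additive actions corresponds to isomorphism of pairs because a $\mathbb{G}_a^n$-equivariant isomorphism $X_1 \to X_2$ lifts, again by the absence of characters, to an equivariant linear isomorphism of the affine cones, which through the evaluations transports into an algebra isomorphism $R_1 \to R_2$ carrying $W_1$ onto $W_2$. The main obstacle will be verifying bijectivity of $\mathrm{ev}$: surjectivity encodes non-degeneracy of $X$ inside $\mathbb{P}^{n+1}$, since if $R \cdot v_0$ spanned a proper subspace $U \subsetneq V$ then $X$ would lie in $\mathbb{P}(U)$, contradicting $\deg X \geq 2$; injectivity says $v_0$ is a cyclic vector for $R$, which reduces to the open orbit hypothesis, the annihilator of $v_0$ in $R$ being a $G$-invariant ideal cutting out a proper closed subvariety of the orbit closure and hence forced to be zero.
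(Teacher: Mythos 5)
The paper does not actually prove this statement: it is quoted from \cite{AP} (Proposition 3), and the surrounding text only sketches the direction from pairs $(R,W)$ to actions. Your proposal reconstructs the standard argument of that reference and of Hassett--Tschinkel, and its overall architecture --- exponentiate $W$ to a unipotent subgroup of $R^{\times}$ acting on $\mathbb{P}(R)$; conversely linearize the action, take the unital subalgebra $R\subseteq \mathrm{End}(V)$ generated by $\mathfrak{g}$, and identify $V$ with $R$ by evaluation at a lift of a point of the open orbit --- is the right one and matches the source.

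Two of your justifications, however, would not survive as written. First, injectivity of $\mathrm{ev}$ is not the statement that ``$v_0$ is a cyclic vector'' (that is surjectivity), and $\mathrm{Ann}(v_0)$ is an ideal of $R$, not something that ``cuts out a proper closed subvariety of the orbit closure.'' The working argument is: for $r\in\mathrm{Ann}(v_0)$ and $g\in G$ one has $r(gv_0)=g(rv_0)=0$ by commutativity, so $r$ kills the linear span of $Gv_0$; that span is a $G$-stable subspace whose projectivization contains the dense orbit and hence $X$, so by nondegeneracy of $X$ it is all of $V$ and $r=0$. Thus injectivity comes from commutativity plus nondegeneracy; openness of the orbit is what you need, separately, to see that $\mathbb{G}_a^n\to\GL(V)$ is faithful (no finite subgroups of a unipotent group in characteristic zero), so that $W=\mathfrak{g}$ really has dimension $n$ and is a hyperplane in $\mathfrak{m}$ --- a point you assert but do not check. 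Second, in the forward direction you never verify that the orbit closure has degree at least two, i.e.\ is not a hyperplane; without this the two constructions are not mutually inverse on the stated sets. It follows because the linear span of $\{\exp(w):w\in W\}$ is the unital subalgebra generated by $W$, which is all of $R$ since $W$ generates $R$ (equivalently, $W\not\supseteq\mathfrak{m}^2$ by Nakayama, consistent with the degree formula of \cite{AS}). Finally, the lifting of the $\mathbb{G}_a^n$-action on $X$ to a linear representation on $V$ is the step carrying the most weight; ``connected with no nontrivial characters'' is the correct slogan, but the mechanism is that for $\deg X\geq 2$ the restriction $H^0(\mathbb{P}^{n+1},\mathcal{O}(1))\to H^0(X,\mathcal{O}_X(1))$ is an isomorphism and $\mathcal{O}_X(1)$ is $\mathbb{G}_a^n$-linearizable; as written this is an assertion rather than a proof, though it is exactly what \cite{AP} supplies.
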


It is proven in \cite[Theorem 5.1]{AS} that the degree of the hypersurface corresponding to a pair $(R, W)$ is the maximal exponent $d$ such that subspace $W$ does not contain the ideal~$\mathfrak{m}^d$. 

\begin{prop}
\begin{enumerate}
\item There are two equivalence classes of additive actions on $Q(2, 3)$. 
\item There is a unique equivalence class of additive actions on $Q(2, 4)$. 
\end{enumerate}
\end{prop}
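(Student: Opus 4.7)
The plan is to combine the correspondence of \cite[Proposition 3]{AP} with the degree formula \cite[Theorem 5.1]{AS} to reduce counting equivalence classes of additive actions on $Q(2,3)$ and $Q(2,4)$ to counting isomorphism classes of pairs $(R,W)$, where $R$ is a local $4$-dimensional commutative $\mathbb{K}$-algebra with maximal ideal $\mathfrak{m}$ and $W\subset\mathfrak{m}$ is a $2$-dimensional subspace that generates $R$. To obtain degree exactly $2$ one needs $\mathfrak{m}^3\subseteq W$ and $\mathfrak{m}^2\not\subseteq W$. After setting this up I would enumerate the possible algebras $R$, classify admissible $W$ modulo $\Aut(R)$ for each, and identify the rank of the resulting quadric via the orbit closure of $\overline 1$.

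Using $\dim\mathfrak{m}/\mathfrak{m}^2$ and the nilpotency length, the $4$-dimensional local algebras split into four isomorphism classes, with Hilbert function $(1,1,1,1)$, $(1,2,1)$ (two algebras, according to whether the multiplication $\mathfrak{m}/\mathfrak{m}^2\otimes\mathfrak{m}/\mathfrak{m}^2\to\mathfrak{m}^2$ has rank $1$ or $2$), or $(1,3)$:
\[
R_1=\mathbb{K}[t]/(t^4),\quad R_2=\mathbb{K}[x,y]/(x^2,y^2),\quad R_3=\mathbb{K}[x,y]/(y^2,xy,x^3),\quad R_4=\mathbb{K}[x,y,z]/(x,y,z)^2.
\]
Since $\mathfrak{m}^2=0$ in $R_4$, it only produces a hyperplane and is discarded.

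For each of $R_1$, $R_2$, $R_3$ I would parametrize admissible $W$ and quotient by $\Aut(R)$. In $R_1$, the conditions force $W=\langle t^3,\ \alpha t+\beta t^2+\gamma t^3\rangle$ with $\alpha\neq 0$; the automorphisms $t\mapsto a_1 t+a_2 t^2+a_3 t^3$ (with $a_1\neq 0$) bring this to $\langle t,t^3\rangle$, and exponentiation yields the orbit $[1:a:a^2/2:a^3/6+c]$ of $\overline 1$, whose closure is $2z_0z_2=z_1^2$, i.e.\ $Q(2,3)$. In $R_2$, any admissible $W$ projects isomorphically to $\mathfrak{m}/\mathfrak{m}^2=\langle x,y\rangle$, so $W=\langle x+\alpha xy,\, y+\beta xy\rangle$; the unipotent automorphisms adding multiples of $xy$ to $x$ and $y$ absorb $(\alpha,\beta)$, and the resulting orbit $[1:a:b:ab]$ gives $z_0z_3=z_1z_2$, i.e.\ $Q(2,4)$. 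In $R_3$ similarly $W=\langle x+\alpha x^2,\, y+\beta x^2\rangle$, and the automorphisms of $R_3$ (constrained by $y^2=0$ to force $\phi(y)$ to have no $x$-component) absorb both parameters; the resulting orbit $[1:a:b:a^2/2]$ yields $2z_0z_3=z_1^2$, which is again $Q(2,3)$.

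Finally, since $R_1\not\cong R_3$ (one is generated by a single element, the other is not), the pairs $(R_1,W)$ and $(R_3,W)$ are non-isomorphic and define inequivalent additive actions on $Q(2,3)$, giving the claimed count of $2$ classes on $Q(2,3)$ and $1$ class on $Q(2,4)$. The main obstacle, I expect, is the orbit analysis for $R_3$: its automorphism group is less symmetric than that of $R_2$ because of the asymmetric relations $y^2=xy=0$ and $x^3=0$, and one must verify that after imposing these constraints enough automorphisms remain to conjugate every admissible $W$ to the standard $\langle x,y\rangle$. The analogous orbit computations in $R_1$ and $R_2$ are comparatively straightforward once the parametrization is set up.
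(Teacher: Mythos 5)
Your proposal follows essentially the same route as the paper: enumerate the four isomorphism classes of $4$-dimensional local algebras, discard $R_4$, reduce each admissible $W$ to a normal form modulo $\mathrm{Aut}(R)$, and read off the rank of the quadric from the orbit of $\overline{1}$ under $\exp(W)$. The details (normal forms $\langle t,t^3\rangle$, $\langle x,y\rangle$, $\langle x,y\rangle$ and the resulting ranks $3$, $4$, $3$) match the paper's computation, so the proposal is correct and not materially different.
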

\begin{proof}
 
There are four isomorphism classes of $4$-dimensional local algebras; see \cite{HT}: \\
$$\begin{aligned}
&R_1 = \mathbb{K}[x]/(x^4),&\  &R_2 = \mathbb{K}[x_1, x_2]/(x_1^2, x_2^2),\\
&R_3 = \mathbb{K}[x_1, x_2]/(x_1^3, x_1x_2, x_2^2),&\  &R_4 = \mathbb{K}[x_1, x_2, x_3]/(x_i^2, x_ix_j).\\
\end{aligned}$$

\smallskip\emph{The case of} $R_1$. In the algebra $R_1$ the maximal ideal $\mathfrak{m}_1$ is  $\left<x, x^2, x^3\right>$. Since we are studying quadrics we are looking for $2$-dimensional subspace $W$ such that $W$ generates $R_1$ and $W$ contains $\mathfrak{m}^3$ but not $\mathfrak{m}^2$. It easy to see that up to automorphism of $R_1$ there is only one such space $W_1 = \left<x, x^3 \right>$. Consider an arbitrary element $a_1x + a_2x^3$ of subspace $W_1$. Then we have
$$\mathrm{exp} (a_1x + a_2x^3) = 1 + a_1x + \frac{a_1^2}{2}x^2 + (\frac{a_1}{6} + a_2)x^3.$$

The closure of the orbit of $\overline{1}$ in $\mathbb{P}(R_1)$ is a quadric of rank 3 given by the equation
$$2X_0X_2 - X_1^2 = 0,$$ 
where $X_0, X_1, X_2, X_3$ are homogeneous coordinates on $\mathbb{P}(R_1)$ which are induced by the basis $1, x, x^2, x^3$ of $R_1$.   So the pair $(R_1, W_1)$ corresponds to an additive action on $Q(2, 3)$.

\smallskip\emph{The case of} $R_2$. In this case the maximal ideal $\mathfrak{m}_2$ is $\left<x_1, x_2, x_1x_2 \right>$. Then up to automorphism of $R_2$ the subspace $W_2$ is  $\left< x_1, x_2 \right>$.  We have
$$\mathrm{exp}(a_1x_1 + a_2x_2) = 1 + a_1x_1 + a_2x_2 + a_1a_2x_1x_2.$$

The closure of the orbit of $\overline{1}$ in $\mathbb{P}(R_2)$ is a quadric of rank 4 given by the equation
$$X_0X_3 - X_1X_2 = 0.$$ 
So the pair $(R_2, W_2)$ corresponds to an additive action on $Q(2, 4)$.

\smallskip\emph{The case of} $R_3$. The maximal ideal $\mathfrak{m}_3$ is $\left<x_1, x_2, x_1^2 \right>$ and up to an automorphism of $R_3$ the only suitable subspace $W_3$ is  $\left<x_1, x_2 \right>$. In this case we have

$$\mathrm{exp}(a_1x_1 + a_2x_2) = 1 + a_1x_1 + \frac{a_1^2}{2}x_1^2 + a_2x_2.$$

Then the closure of the orbit of $\overline{1}$ in $\mathbb{P}(R_3)$ is a quadric of rank 3 given by the equation
$$X_0X_2 - \frac{1}{2}X_1^2 = 0.$$ 
 
So the pair $(R_3, W_3)$ corresponds to an additive action on $Q(2, 3)$.

\smallskip\emph{The case of} $R_4$. The maximal ideal $\mathfrak{m_4}$ is $\left<x_1, x_2, x_3 \right>$ and there is no suitable subspace~$W_4$. 

\end{proof}

\begin{re}
This result agrees with the recent classification of additive actions on complete toric surfaces given by Dzhunusov in \cite[Theorem 3]{DZ}. 
\end{re}

The case of quadrics $Q(3, 3)$ or $Q(3, 4)$ can be considered in a similar way.

\begin{prop}
\begin{enumerate}
\item There are seven equivalence classes of additive actions on $Q(3, 3)$;
\item There are three equivalence classes of additive actions on $Q(3, 4)$.
\end{enumerate}
\end{prop}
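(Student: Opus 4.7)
The plan is to apply the bijection recalled in the previous section between equivalence classes of additive actions on $n$-dimensional projective hypersurfaces of degree at least $2$ and isomorphism classes of pairs $(R, W)$, where $R$ is an $(n+2)$-dimensional local commutative associative $\mathbb{K}$-algebra with maximal ideal $\mathfrak{m}$, and $W \subseteq \mathfrak{m}$ is an $n$-dimensional subspace generating $R$ as a unital algebra. By \cite[Theorem 5.1]{AS}, such a pair corresponds to a quadric precisely when $\mathfrak{m}^3 \subseteq W$ and $\mathfrak{m}^2 \not\subseteq W$. Therefore for $n = 3$ I must enumerate $5$-dimensional local algebras together with their admissible $3$-dimensional subspaces.

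First I would list all isomorphism classes of $5$-dimensional local algebras. Organising by Hilbert function $(1, h_1, h_2, \ldots)$ with $h_i = \dim \mathfrak{m}^i/\mathfrak{m}^{i+1}$, one obtains nine classes: one of type $(1,1,1,1,1)$, two of type $(1,2,1,1)$, two of type $(1,2,2)$, three of type $(1,3,1)$ (distinguished by the rank $1$, $2$, or $3$ of the induced quadratic form $\mathfrak{m}/\mathfrak{m}^2 \to \mathfrak{m}^2$), and one of type $(1,4)$. The algebra of type $(1,4)$ has $\mathfrak{m}^2 = 0$ and so supports no degree-$2$ pair; the rank-$3$ Gorenstein algebra of type $(1,3,1)$ admits a unique admissible $W$ but produces a smooth rank-$5$ quadric and so is irrelevant for our count.

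For each of the remaining seven algebras the procedure is: compute $\mathrm{Aut}(R)$ directly from the defining relations; parametrise the admissible $W$'s (when $\mathfrak{m}^3 \neq 0$, as lifts of graphs $\mathfrak{m}/\mathfrak{m}^2 \to \mathfrak{m}^2/\mathfrak{m}^3$; when $\mathfrak{m}^3 = 0$, first fix the line $W \cap \mathfrak{m}^2 \in \mathbb{P}(\mathfrak{m}^2)$ up to its $\mathrm{Aut}(R)$-orbit, then parametrise a complementary section); and reduce to canonical form under the residual $\mathrm{Aut}(R)$-action. Finally, for each orbit representative I would compute $\exp(w) = 1 + w + w^2/2 + w^3/6$ for $w = a_1 v_1 + a_2 v_2 + a_3 v_3 \in W$, write its coordinates in a chosen basis of $R$, and identify the unique (up to scalar) homogeneous quadratic relation among them; the rank of this quadric distinguishes $Q(3,3)$ from $Q(3,4)$.

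The hard part will be the $\mathrm{Aut}(R)$-orbit analysis. Although each automorphism group is easy to write down, one must be careful with ``off-diagonal'' automorphisms that merge normal forms otherwise looking distinct. For instance, in $\mathbb{K}[x,y]/(xy, y^2 - x^3)$ the relation $y^2 = x^3$ permits automorphisms of the form $x \mapsto x + \beta y,\ y \mapsto y + \gamma x^2$, and these identify two apparently distinct one-parameter families of subspaces into a single orbit; by contrast, in the superficially similar $\mathbb{K}[x,y]/(xy, y^2, x^4)$ the relation $y^2 = 0$ forbids the analogous coupling, so one genuinely obtains two inequivalent pairs. After completing this bookkeeping for the seven relevant algebras and tabulating the resulting ranks, one arrives at exactly $7$ pairs producing $Q(3,3)$ and $3$ pairs producing $Q(3,4)$, as claimed.
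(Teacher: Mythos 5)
Your proposal follows essentially the same route as the paper: both reduce the count to classifying pairs $(R,W)$ with $R$ a five-dimensional local algebra and $W$ a generating hyperplane of $\mathfrak{m}$ containing $\mathfrak{m}^3$ but not $\mathfrak{m}^2$, and then enumerate these pairs case by case (the paper simply records the outcome in a table of eleven pairs, of which seven yield a quadric of rank $3$, three yield rank $4$, and one yields the irrelevant smooth rank-$5$ quadric). Your stratification into nine algebras by Hilbert function, the two exclusions, and the worked orbit analysis for the two type-$(1,2,1,1)$ algebras all agree with the paper's table and, if anything, make the completeness of the enumeration more transparent than the paper's presentation.
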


\begin{proof}
We need to classify pairs $(R, W),$ where $R$ is a local algebra of dimension 5 with the maximal ideal $\mathfrak{m}$ and $W$ is a hypersurface in the maximal ideal that generates $R$. We consider only subspaces $W$ such that $W$ contains $\mathfrak{m}^3$ but not $\mathfrak{m}^2$. In Table \ref{tab:metka} we list all such pairs. 

Each row corresponds to a desired pair $(R, W)$. So there are seven additive actions on a quadric of rank 3 and three additive actions on a quadric of rank 4.

\begin{table}
\begin{tabular}{ | c | c | c |}
\hline
 $R$ & $W$ & \ Type of hypersurface \\ \hline
$\mathbb{K}[x]/(x^5)$ &  $\left<x, x^3, x^4\right>$ &\ \ \ quadric of rank 3\\ \hline
$\mathbb{K}[x_1, x_2]/(x_1^3, x_2^3,x_1x_2)$ &  $\left<x_1, x_2, x_1^2\right>$ &\ \ \ quadric of rank 3\\ \hline
$\mathbb{K}[x_1, x_2]/(x_1^3, x_2^3,x_1x_2)$ &  $\left<x_1, x_2, x_1^2 + x_2^2\right>$ &\ \ \ quadric of rank 4\\ \hline
$\mathbb{K}[x_1, x_2, x_3]/(x_1x_2, x_1x_3, x_2x_3, x_1^2 - x_2^2, x_1^2 - x_3^2)$ &  $\left<x_1, x_2, x_3\right>$ &\ \ \ quadric of rank 5\\ \hline
$\mathbb{K}[x_1, x_2, x_3]/(x_1^2, x_1x_2, x_1x_3, x_2x_3, x_2^2 - x_3^2)$ & $\left<x_1, x_2, x_3\right>$ &\ \ \ quadric of rank 4\\ \hline
$\mathbb{K}[x_1, x_2]/(x_1x_2, x_1^3 - x_2^2)$ & $\left<x_1, x_2, x_1^3\right>$ &\ \ \ quadric of rank 3\\ \hline
$\mathbb{K}[x_1, x_2]/(x_1^4, x_2^2, x_1x_2)$ & $\left<x_1, x_2, x_1^3\right>$ &\ \ \ quadric of rank 3\\ \hline
$\mathbb{K}[x_1, x_2]/(x_1^4, x_2^2, x_1x_2)$ & $\left<x_1, x_2 + x_1^2, x_1^3\right>$ &\ \ \ quadric of rank 3\\ \hline
$\mathbb{K}[x_1, x_2]/(x_1^3 x_2^2, x_1^2x_2)$ & $\left<x_1, x_2, x_1^2\right>$ &\ \ \ quadric of rank 4\\ \hline
$\mathbb{K}[x_1, x_2]/(x_1^3, x_2^2, x_1^2x_2)$ & $\left<x_1^2, x_2, x_1x_2\right>$ &\ \ \ quadric of rank 3\\ \hline
$\mathbb{K}[x_1, x_2, x_3]/(x_1^3, x_2^2, x_3^2, x_1x_2, x_1x_3, x_2x_3)$ & $\left<x_1, x_2, x_3\right>$ &\ \ \ quadric of rank 3\\ 

\hline
\end{tabular}\vspace{0.1 cm}
\caption{}\label{tab:metka}

\end{table}

\end{proof}

\begin{prop}
There are infinitely many equivalence classes of additive actions on $Q(4, 4)$.
\end{prop}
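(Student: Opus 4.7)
My plan is to apply the correspondence from \cite[Proposition 3]{AP} between equivalence classes of additive actions on projective hypersurfaces of degree $\geq 2$ and isomorphism classes of pairs $(R, W)$, and to exhibit an explicit one-parameter family of such pairs that produces actions on $Q(4, 4)$ falling into infinitely many isomorphism classes. I would take the local algebra $R = \mathbb{K}[x, y]/(xy, x^3 - y^3)$, which has dimension $6$ with basis $\{1, x, y, x^2, y^2, x^3\}$ (noting that $y^3 = x^3$); here $\mathfrak{m}^2 = \langle x^2, y^2, x^3\rangle$, $\mathfrak{m}^3 = \langle x^3\rangle$, and $\mathfrak{m}^4 = 0$. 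For each $(\alpha : \beta) \in \mathbb{P}^1$ with $\alpha\beta \neq 0$, I would set
$$W_{\alpha, \beta} = \langle x,\; y,\; \alpha x^2 + \beta y^2,\; x^3\rangle,$$
a $4$-dimensional hyperplane in $\mathfrak{m}$ that generates $R$, contains $\mathfrak{m}^3$, and does not contain $\mathfrak{m}^2$.

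The first step is to compute the orbit of $\overline{1}$ under $\mathrm{exp}(W_{\alpha, \beta})$ in $\mathbb{P}(R) = \mathbb{P}^5$. Writing a general element of $W_{\alpha, \beta}$ as $w = ax + by + c(\alpha x^2 + \beta y^2) + dx^3$ and using $xy = 0$, $y^3 = x^3$, and $\mathfrak{m}^4 = 0$, the exponential series truncates at $w^3 = (a^3 + b^3)x^3$. Expanding in the chosen basis yields that the orbit closure is cut out by
$$2\beta X_0 X_3 - 2\alpha X_0 X_4 - \beta X_1^2 + \alpha X_2^2 = 0.$$
The associated symmetric matrix has the row for $X_5$ zero and the rows for $X_3$ and $X_4$ linearly dependent; the remaining four rows are independent, so the rank is exactly $4$. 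Hence the hypersurface is $Q(4, 4)$, and each pair $(R, W_{\alpha, \beta})$ yields an additive action on this quadric.

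The second step is to compute the $\mathrm{Aut}(R)$-action on $(\alpha : \beta)$. Any $\varphi \in \mathrm{Aut}(R)$ is determined by $f = \varphi(x)$ and $g = \varphi(y)$, subject to $fg = 0$ and $f^3 = g^3$. Reducing $fg = 0$ modulo $\mathfrak{m}^3$ and using $xy = 0$, the linear parts of $(f, g)$ must be either diagonal in $(x, y)$ or coordinate-swapped; then $f^3 = g^3$ restricts the diagonal scalar ratio to a cube root of unity. Consequently, the induced action on $\mathfrak{m}^2/\mathfrak{m}^3 = \langle \bar x^2, \bar y^2\rangle$ factors through the dihedral group of order $6$ generated by $(\alpha : \beta) \mapsto (\alpha : \omega^2\beta)$ with $\omega^3 = 1$ and by $(\alpha : \beta) \mapsto (\beta : \alpha)$. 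Moreover, within the stabilizer of $(\alpha : \beta)$ the freedom in the $\mathfrak{m}^2$-parts of $f$ and $g$ (in particular the $y^2$-coefficients) acts transitively on the possible lifts of $\bar x, \bar y$ to $W$, so that the isomorphism class of $(R, W_{\alpha, \beta})$ depends only on the orbit of $(\alpha : \beta)$ under this finite group.

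Since $\mathbb{P}^1 \setminus \{(0:1),\, (1:0)\}$ modulo a finite group action still has infinitely many orbits, this exhibits infinitely many equivalence classes of additive actions on $Q(4, 4)$. The main technical obstacle will be the structural analysis of $\mathrm{Aut}(R)$: deriving the two constraints on $(f, g)$ from $fg = 0$ and $f^3 = g^3$, tracing their effect on $\mathfrak{m}^2/\mathfrak{m}^3$, and verifying that all residual freedom in $W$ beyond $(\alpha : \beta)$ is absorbed by the stabilizer. Once these points are settled, the infinitude of orbits of a finite group acting on $\mathbb{P}^1$ is immediate.
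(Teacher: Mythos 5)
Your proposal is correct and follows essentially the same route as the paper: the same algebra $R=\mathbb{K}[x,y]/(xy,\,x^3-y^3)$, the same one-parameter family of hyperplanes $\langle x,y,\alpha x^2+\beta y^2,x^3\rangle$ (the paper just normalizes $\alpha=1$), the same rank-4 quadric computation, and the same key lemma that $\Aut(R)$ moves the parameter only within a finite orbit. Your derivation of that lemma — forcing the linear parts of $\varphi(x),\varphi(y)$ to be diagonal or swapped by reducing $\varphi(x)\varphi(y)=0$ modulo $\mathfrak{m}^3$ and then reading off the induced finite (order-6) action on $\mathbb{P}(\mathfrak{m}^2/\mathfrak{m}^3)$ — is a slightly cleaner packaging than the paper's annihilator-dimension argument, but it is the same idea.
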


\begin{proof}
Consider a local algebra $R = \mathbb{K}[x_1, x_2]/(x_1x_2, x_1^3 - x_2^3)$. There is a family of subspaces $W_{\alpha} = \left<x_1, x_2, x_1^2 + \alpha x_2^2, x_1^3 \right>$ in the maximal ideal $\mathfrak{m} = \left<x_1, x_2, x_1^2, x_2^2, x_1^3\right>, \alpha \in \mathbb{K}$. 

\begin{lem}
Let $\varphi: R \to R$ be an automorphism of $R$ such that $\varphi(W_{\alpha_1}) = W_{\alpha_2}$. Then either $\alpha_1 = \varepsilon \alpha_2$, or $\alpha_1\alpha_2 = \varepsilon$, where $\varepsilon$ is a root of unity of degree 3. 
\end{lem}

\begin{proof}
Let $a$ be an element of $\mathfrak{m}$. Then $a$ can be represented in the form
$$a = a_1x_1 + a_2x_2 + a_3x_1^2 + a_4x_2^2 + a_5x_1^3.$$ 
Denote by $\mathrm{Ann} (a)$ the annihilator of $a$. Direct calculations show that dimension of $\mathrm{Ann} (a)$ is equal to 2 if and only if $a_1a_2 \neq 0$. Dimension of $\mathrm{Ann} (x_1)$ is 3 so dimension of $\mathrm{Ann} (\varphi(x_1))$ is also equal to $3$. Since $\varphi(x_1) \in W_{\alpha_2}$, the element $\varphi(x_1) $ can be represented as 
$$\varphi(x_1) = b_1x_1 + b_2x_2 + b_3(x_1^2 + \alpha_2x_2^2) + b_4x_1^3.$$ 

But $\mathrm{dim\  Ann} (\varphi(x_1)) = 3$ so either $b_1 = 0$, or $b_2 = 0$.

Consider the case $b_1 = 0$.  The same arguments show that in this case $\varphi(x_2)$ can be represented as 
$$\varphi(x_2) = c_1x_1 + c_3(x_1^2 + \alpha_2x_2^2) + c_4x_1^3.$$

Let us apply the automorphism $\varphi$ to the element $x_1^2 + \alpha_1x_2^2$:
\begin{equation*} 
\begin{split}\varphi(x_1^2 + \alpha_1x_2^2) & =  \varphi(x_1)^2 + \alpha_1\varphi(x_2)^2 =\\
&= (b_2x_2 + b_3(x_1^2 + \alpha_2x_2^2) + b_4x_1^3)^2 + \alpha_1( c_1x_1 + c_3(x_1^2 + \alpha_2x_2^2) + c_4x_1^3)^2 = \\
&= b_2^2x_2^2 + 2b_2b_3\alpha_2x_1^3 + \alpha_1(c_1^2x_1^2 + 2c_1c_3x_1^3)=\\ 
&= b_2^2x_2^2 + \alpha_1c_1^2x_1^2 + (2b_2b_3\alpha_2 + 2\alpha_1c_1c_3)x_1^3. \end{split} \end{equation*}

This element belongs to $W_{\alpha_2}$ so $\frac{b_2^2}{\alpha_1c_1^2} = \alpha_2$. But since $x_1^3 = x_2^3$ we have
$$\varphi(x_1)^3 = b_2^3x_2^3 =  \varphi(x_2)^3 = c_1^3x_1^3.$$
Then $\frac{b_2^3}{c_2^3} = 1$ so $\frac{b_2}{c_2}$ is a root of unity of degree 3. Then we have $\alpha_1\alpha_2 = \varepsilon$.

Now consider the case $b_2 = 0$. Then $$\varphi(x_2) = c_2x_2 +  c_3(x_1^2 + \alpha_2x_2^2) + c_4x_1^3.$$

We have

\begin{equation*}
\begin{split}
\varphi(x_1^2 + \alpha_1x_2^2)  &=  \varphi(x_1)^2 + \alpha_1\varphi(x_2)^2 =\\
& = (b_1x_1 + b_3(x_1^2 + \alpha_2x_2^2) + b_4x_1^3)^2 + \alpha_1( c_2x_2 + c_3(x_1^2 + \alpha_2x_2^2) + c_4x_1^3)^2 = \\
&= b_1^2x_1^2 + 2b_2b_3\alpha_2x_1^3 + \alpha_1(c_2^2x_2 + 2c_2c_3x_1^3) = \\
&= b_2^2x_2^2 + \alpha_1c_2^2x_1^2 + (2b_2b_3\alpha_2 + \alpha_1c_2c_3)x_1^3.\end{split} \end{equation*}

So $\frac{\alpha_1c_2^2}{b_2^2} = \alpha_2$ and we obtain $\alpha_1 = \varepsilon\alpha_2$.

\end{proof}

So for every $\alpha \in \mathbb{K}$ the pair $(R, W_{\alpha})$ is isomorphic to a pair $(R, W_{\alpha'})$ only for a finite number of values $\alpha'$. 

It is left to check that every pair $(R, W_{\alpha})$ with $\alpha \neq 0$ corresponds to an additive action on a quadric of rank 4. 

It is easy to see that the space $W_{\alpha}$ generates $R$ and contains $\mathfrak{m}^3$ but not $\mathfrak{m}^2$. So we have

$$\mathrm{exp} (a_1x_1 + a_2x_2 + a_3(x_1^2 + \alpha x_2^2) + a_4x_1^3) = $$
$$ =  1 + a_1x_1 + a_2x_2 + (\frac{a_1^2}{2} + a_3)x_1^2 + (\frac{a_2^2}{2} + \alpha a_3)x_2^2 + (\frac{a_1^3}{6} + \frac{a_2^3}{6} + 2a_1a_3 + 2\alpha a_2a_3 + a_4)x_1^3. $$

The closure of the orbit of $\overline{1}$ in $\mathbb{P} (R)$ is a quadric given by
$$\alpha X_3X_0 - X_4X_0 - \frac{\alpha}{2}X_1^2 + \frac{X_2^2}{2},$$
where the homogeneous coordinates $X_0, X_1, X_2, X_3, X_4, X_5$ on $\mathbb{P} (R)$ are induced by the basis $1, x_1, x_2, x_1^2, x_2^2, x_1^3$ of $R$. When $\alpha \neq 0$ this is a quadric of rank 4.

\end{proof}

\begin{re}
One can check that there are finitely many equivalence classes of additive actions on $Q(4, 3)$. But there are 25 isomorphism classes of $6$-dimensional local algebras and some algebras have multiple suitable subspaces $W$. So it is hard to list them all.
\end{re}


\begin{thebibliography}{spmpsci}

\bibitem{AF}
Ivan Arzhantsev. Flag varieties as equivariant compactifications of $\BG^n_a$. Proc. Amer. Math. Soc. 139 (2011), no. 3, 783–786

\bibitem{ABZ}
Ivan Arzhantsev, Sergey Bragin, and Yulia Zaitseva. Commutative algebraic monoid structures on affine spaces. Commun. in Cont. Math. 22, (2020), no. 8, P. 1950064: 1.

\bibitem{ADHL} 
Ivan Arzhantsev, Ulrich Derenthal, Jurgen Hausen, and Antonio Laface. Cox rings. Cambridge
Studies in Adv. Math. 144, Cambridge University Press, NY, 2015

\bibitem{AP}
Ivan Arzhantsev and Andrei Popovkiy. Additive actions on projective hypersurfaces. In: Automorphisms in Birational and Affine Geometry, Springer Proc. Math. Stat., 79, (2014), 17-33


\bibitem{[ARRO]}
 Ivan Arzhantsev and Elena Romaskevich. Additive actions on toric varieties. Proc. Amer. Math. Soc. 145 (2017), no. 5, 1865-1879


\bibitem{AS}
Ivan Arzhantsev and Elena Sharoyko. Hassett-Tschinkel correspondence: Modality and projective hypersurfaces. J. Algebra 348 (2011), no. 1, 217-232

\bibitem{COX2}
David Cox. The homogeneous coordinate ring of a toric variety. J. Alg. Geom. 4 (1995), no. 1, 17–50

\bibitem{COX}
David Cox, John Little, and Henry Schenck. Toric Varieties. Graduate Studies in Math. 124, AMS,
Providence, RI, 2011

\bibitem{DER}
Ulrich Derenthal and Daniel Loughran. Singular del Pezzo surfaces that are equivariant compactifications. J. Math. Sciences 171 (2010), no. 6, 714–724

\bibitem{DEV}
Rostislav Devyatov. Unipotent commutative group actions on flag varieties and nilpotent multiplications. Transform. Groups 20 (2015), no. 1, 21–64

\bibitem{DZ}
Sergey Dzhunusov. Additive actions on complete toric surfaces. arXiv:1908.03563

\bibitem{DZ2}
Sergey Dzhunusov. On uniqueness of additive actions on complete toric varieties. arXiv:2007.10113

\bibitem{FEI}
Evgeny Feigin. $\BG^M_a$ degeneration of flag varieties. Selecta Math. New Ser. 18 (2012), no. 3, 513–537

\bibitem{FU}
Baohua Fu and Jun-Muk Hwang. Uniqueness of equivariant compactifications of $\BC^n$ by a Fano manifold of Picard number 1. Math. Research Letters 21 (2014), no. 1, 121–125

\bibitem{FUL}
William Fulton. Introduction to toric varieties. Ann. of Math. Studies 131, Princeton
University Press, Princeton, NJ, 1993.

\bibitem{HT}
Brendan Hassett and Yuri Tschinkel. Geometry of equivariant compactifications of $\BG^n_a$. Int. Math. Res. Notices 1999 (1999), no. 22, 1211–1230


\bibitem{SH}
Elena Sharoiko. Hassett-Tschinkel correspondence and automorphisms of a quadric. Sbornik Math. 200 (2009), no. 11, 1715-1729


\end{thebibliography}
\end{document}